  \newcommand{\R}{\ensuremath{\mathbb{R}}}%
  \newcommand{\Z}{\ensuremath{\mathbb{Z}}}%
  \newcommand{\N}{\ensuremath{\mathbb{N}}}%
	\newcommand{\F}{\ensuremath{\mathcal{F}}}%
		\newcommand{\Gcal}{\ensuremath{\mathcal{G}}}%
    \newcommand{\A}{\ensuremath{\mathcal{A}}}%
  \newcommand{\Sym}{\ensuremath{\operatorname{Sym}}}%
      \newcommand{\Alt}{\ensuremath{\operatorname{Alt}}}%
    \newcommand{\alt}{\ensuremath{\operatorname{Alt}}}%
        \newcommand{\vol}{\ensuremath{\operatorname{vol}}}%
  \newcommand{\supp}{\ensuremath{\textrm{supp}}}%
    \newcommand{\acts}{\ensuremath{\curvearrowright}}%
  \newcommand{\sub}{\ensuremath{\operatorname{Sub}}}%
  \newcommand{\homeo}{\ensuremath{\operatorname{Homeo}}}%
    \newcommand{\aut}{\ensuremath{\operatorname{Aut}}}%
                \newcommand{\diam}{\ensuremath{\operatorname{diam}}}%
			\newcommand{\Fsf}{\ensuremath{\mathsf{F}}}
			\newcommand{\Asf}{\ensuremath{\mathsf{A}}}
\newcommand{\Class}{\ensuremath{\mathcal{C}}}
\newcommand{\Clin}{\ensuremath{\mathcal{C}(\mathrm{lin})}}
\theoremstyle{definition}
  \newtheorem{defin}{Definition}[section]
\theoremstyle{plain}
  \newtheorem{thm}[defin]{Theorem}
  \newtheorem{thm-intro}[defin]{Theorem}
  \newtheorem{prop}[defin]{Proposition}
    \newtheorem{prop-intro}[defin]{Proposition}
    \newtheorem{prop-def}[defin]{Proposition-Definition}
  \newtheorem{cor}[defin]{Corollary}
   \newtheorem{lem}[defin]{Lemma}
\theoremstyle{remark}
  \newtheorem{remark}[defin]{Remark}
  \newtheorem{example}[defin]{Example}
\begin{document}

  \date{November 22, 2024}	
	
\title{On the growth of actions of free products}

\author{Adrien Le Boudec}
\address{CNRS, UMPA - ENS Lyon, 46 all\'ee d'Italie, 69364 Lyon, France}
\email{adrien.le-boudec@ens-lyon.fr}

\author{Nicol\'as Matte Bon}
\address{
	CNRS,
	Institut Camille Jordan (ICJ, UMR CNRS 5208),
	Universit\'e Claude Bernard Lyon 1,
	43 blvd.\ du 11 novembre 1918,	69622 Villeurbanne,	France}

\email{mattebon@math.univ-lyon1.fr}

\author{Ville Salo}
\address{University of Turku, Department of Mathematics and Statistics, Finland.}

\email{vosalo@utu.fi}

\thanks{Supported by the LABEX MILYON (ANR-10-LABX-0070) of Universite de Lyon, within the program "Investissements d'Avenir" (ANR-11-IDEX-0007) operated by the French National Research Agency.}

\maketitle

%
%

\begin{abstract}
	If $G$ is a finitely generated group and $X$ a $G$-set, the growth of the action of $G$ on $X$ is the function that measures the largest cardinality of a ball of radius $n$ in the (possibly non-connected) Schreier graph $\Gamma(G,X)$. We consider the following stability problem: if $G,H$ are finitely generated groups admitting a faithful action of growth bounded above by a function $f$, does the free product $G \ast H$ also admit a faithful action of growth bounded above by $f$? We show that the answer is positive under additional assumptions, and negative in general. In the negative direction, our counter-examples are obtained with $G$ either the commutator subgroup of the topological full group of a minimal and expansive homeomorphism of the Cantor space, or $G$ a Houghton group. In both cases, the group $G$ admits a faithful action of linear growth, and we show that $G\ast H$  admits no faithful action of subquadratic growth provided $H$ is non-trivial. {In the positive direction, we describe a class of groups that admit actions of linear growth and is closed under free products and exhibit examples within this class, among which the Grigorchuk group.} \end{abstract}

\begin{flushright}
\begin{minipage}[t]{0.85\linewidth}\itshape\small
Dedicated with admiration to Slava Grigorchuk on the occasion of his 70th birthday.
\end{minipage}
\end{flushright}



\section{Introduction}

Let $G$ be a finitely generated group, equipped with some  finite symmetric generating set $S$ (implicit in what follows). For a $G$-set $X$, we denote by $\Gamma(G, X)$ the Schreier graph of the action, with vertex set $X$  and edges $(x, sx)$ for $x\in X$ and $s\in S$. Note that we do not assume the action of $G$ on $X$ to be transitive, so that the graph $\Gamma(G, X)$ need not be connected. Let $\vol_{G, X}\colon \N \to \N$ be the function that measures the volume of the largest ball appearing in $\Gamma(G, X)$:
\[\vol_{G, X}(n)=\max_{x\in X} |B_G(n) \cdot x|,\]
where $B_G(n)$ denotes the ball of radius $n$ around the identity in $G$ with respect to the word metric associated to $S$. 
For two functions $f_1, f_2\colon \N \to \N$, we write $f_1(n)\preccurlyeq f_2(n)$ if there exists $C>0$ such that $f_1(n)\le Cf_2(Cn)$, and $f_1(n)\simeq f_2(n)$ if $f_1(n)\preccurlyeq f_2(n) \preccurlyeq  f_1(n)$. The function $\vol_{G, X}(n)$ does not depend on the choice of  the generating set $S$ up to the equivalence relation $\simeq$. 

\begin{defin} \label{defin-class}
Given a function $f\colon \N\to\N$, we denote by $\Class(f)$ the class of finitely generated groups $G$ such that there exists a faithful $G$-set $X$ such that $\vol_{G, X}(n) \preccurlyeq f(n)$.
\end{defin}

It is not hard to see that the class $\Class(f)$ is stable under taking finitely generated subgroups, and under passing to a finite index overgroup \cite[\S 1.3]{LB-MB-solv}. We use the special notation $\Clin$ for the distinguished case $f(n)\simeq n$, which is the slowest possible growth for a faithful action of an infinite group. The class $\Clin$ is richer than it might look at first sight. It is not hard to check that it contains all virtually abelian groups. The fact that non-abelian free groups belong to $\Clin$ is well-known and can be traced back to Schreier \cite{Sch}.  A famous example of a group in $\Clin$ is the Grigorchuk group (and all Grigorchuk groups $G_\omega$ from \cite{Gri-growth}),  see Bartholdi and Grigorchuk \cite{Bar-Gri-Hecke}. Many other interesting groups of dynamical origin are naturally given by an action of linear growth (and hence belong to $\Clin$): topological full groups of homeomorphisms of the Cantor set, and Nekrashevych's fragmentation of dihedral groups \cite{Nek-frag}.  More examples of groups in $\Clin$ include the lamplighter group $C_p\wr \Z$, the Houghton groups, or Neumann's groups from \cite{Neumann-manygroups} (the latter are defined by an action whose orbits are all finite, yet having linear growth). 

In \cite{Salo-graph-prod} the third author investigated the class of subgroups of the topological full group of full shifts over $\Z$ (which is contained in $\Clin$), and gave sufficient conditions under which graph products of such groups remain inside that class \cite[Theorem 3]{Salo-graph-prod}. That result implies in particular that the class $\Clin$ contains all right angled Artin groups  \cite[Theorem 1]{Salo-graph-prod}, and hence all their subgroups as well. Combined with results of many authors, this is a vast source of examples of groups in $\Clin$, as many groups are known to embed (upon to passing to a finite index subgroup) in some right angled Artin group. For example, this is the case for instance surface groups, and more generally any hyperbolic group acting geometrically on a CAT(0) cube complex, as follows from Agol's final step in the solution of the virtual Haken conjecture \cite{Agol} combined with the previous results of Haglund and Wise \cite{Ha-Wi1}.

Motivated by this last class of examples, we consider here the question whether the class $\Clin$ (or the classes $\Class(f)$ for more general $f$) is closed under free products, or more generally graph products.  There is a natural strategy to attempt to answer that question affirmatively. Given two groups $G_1$ and $G_2$ and $X_i$ a $G_i$-set, $i=1,2$, one may consider any set $X=X_1\sqcup_{A_1\cong_{\phi} A_2} X_2$ obtained by identifying suitable subsets $A_1 \subset X_1$ and $A_2 \subset X_2$ via a bijection $\phi\colon A_1\to A_2$. The action of each $G_i$ extends to $X$ as a trivial action outside of the copy of $X_i$ in $X$. This defines an action of the free product $G=G_1\ast G_2$. On the one hand, if the subsets along which the $X_i$ are glued are chosen wisely, one can hope to control the growth function $\vol_{G, X}(n)$ in terms of $\vol_{G_i, X_i}(n), i=1,2$. On the other hand, if the gluing is sufficiently generic, the action of $G$ on $X$ is likely to be faithful. The question therefore becomes whether, for $G_1, G_2$  in $\Class(f)$, the tension between these two conditions can be conciliated.  A naive implementation of this strategy (suitably adapted to more general graph products) gives the following. We refer to \S \ref{subsec-def-graph-prod} for the definition of graph products. 




 \begin{prop-intro}[Proposition \ref{prop-largebound-graph-prod}] \label{p-naive-bound}
Let $k\geq 1$ and $G_1,\ldots,G_k \in \Class(f)$. Then the free product $G=G_1\ast \cdots \ast G_k$ belongs to $\Class(g)$, where $g(n)  = n f(n)$. More generally, any graph product of the $G_i$'s belongs to $\Class(g)$. 
\end{prop-intro}

Proposition \ref{p-naive-bound} is very often not optimal. In many cases, appropriate choices  allow to show that a graph product of groups in $\Class(f)$ remains in $\Class(f)$.  In \S \ref{subsec-subclass-stable} we discuss a sufficient condition for this to be the case. That condition is an elaboration of a condition considered in \cite{Salo-graph-prod}.  Proposition \ref{prop-large-displ-stable} provides in particular a direct and elementary proof of the fact that right-angled Artin groups belong to $\Clin$ (which essentially follows the same lines as the proof in \cite{Salo-graph-prod}, but avoids the technicalities coming from the full group setting). More generally, Proposition \ref{prop-large-displ-stable} allows to exhibit examples of graph products in the class $\Clin$:

\begin{prop-intro} \label{prop-intro-grig}
For $k\ge 1$, the free product $G_1\ast \cdots \ast G_k$ belongs to $\Clin$ whenever each free factor $G_i$ is isomorphic to one of the following:
\begin{itemize}
\item a finite group;
\item a finitely generated abelian group;
\item a finitely generated subgroup of a right-angled Artin group;
\item the Grigorchuk group;
\item the lamplighter group $(\Z/p\Z) \wr \Z$ for $p\ge 2$. 

\end{itemize}
More generally, the graph product of any finite family of groups in the list above belongs to $\Clin$.
\end{prop-intro}

\subsection{The main results}
 
The main goal of this paper is to provide examples that show that the bound obtained from the simple construction from Proposition \ref{p-naive-bound} is sharp in general, even for free products. We consider two families of examples. 

The first are topological full groups of minimal group actions on the Cantor set, more precisely their alternating subgroups in the sense of Nekrashevych \cite{Nek-simple} (see \S  \ref{subsec-full-group} for definitions).  Following \cite{LB-MB-solv}, we shall say that a finitely generated group $G$ has a \textbf{Schreier growth gap} $f(n)$ if every faithful $G$-set $X$ satisfies $\vol_{G, X}(n)\succcurlyeq f(n)$.

\begin{thm-intro} \label{t-intro-full}
Let $\Gcal \acts  \mathcal{X}$ be a minimal expansive action of a finitely generated group on the Cantor set, and set $f(n)=\vol_{\Gcal, X}(n)$. Let $G$ be the alternating full group of the action (so that $G$ is finitely generated and belongs to $\Class(f(n))$, see \S \ref{subsec-full-group}). Then for every non-trivial finitely generated group $H$, the group $G\ast H$ has a Schreier growth gap $nf(n)$. 
\end{thm-intro} 



A relevant special case in the previous theorem is $\mathcal{G}=\Z$ (in that case $G$ coincides with the commutator subgroup of the topological full group, by the results of Matui \cite{Mat-simple}). In that case we obtain examples of groups $G\in \Clin$ such that $G\ast H$ has a Schreier growth gap $n^2$ for any non-trivial group $H$. The quadratic gap is optimal in general, as Proposition \ref{p-naive-bound}  says that $G\ast H$ has a faithful action of growth $\simeq n^2$ for instance when $H$ is cyclic and non-trivial.

Our second family of examples is the family of Houghton groups $H_r, r\ge 2$. Recall that the group $H_2$ is defined as the group of permutations of $\Z$ that coincide with a translation outside a finite set; see \S \ref{subsec-houghton} for the definition of $H_r$. Each group $H_r$ is finitely generated and belongs to $\Clin$.

\begin{thm-intro} \label{t-intro-Houghton}
Let $G=H_r$ be the Houghton group on $r\ge 2$ rays. Then  for every non-trivial finitely generated group $H$, the group $G\ast H$ has a Schreier growth gap $n^2$. 
\end{thm-intro}

\subsection{Outline of the proof of Theorems \ref{t-intro-full} and \ref{t-intro-Houghton}}

These two results share the same proof mechanism. It relies on the description of the confined subgroups of the groups $G$ in their statements, obtained in previous works of the first two authors. Recall that a subgroup $H$ of a group $G$ is confined if the set of conjugates of $H$ does not accumulate on the trivial subgroup in the space $\sub(G)$ of subgroups of $G$, endowed with the Chabauty topology. When $G$ is finitely generated, convergence in the space $\sub(G)$ can be interpreted in the space of marked Schreier graphs on the corresponding coset spaces $G/H$. The study of confined subgroups of a group $G$ leads (among other applications) to results on the growth function $\vol_{G, X}(n)$ of  $G$-actions, such as the existence of a Schreier growth gap \cite{MB-full,LB-MB-comm-lemma,LB-MB-solv}. For $G$ as in Theorem \ref{t-intro-full} or \ref{t-intro-Houghton}, a classification of the confined subgroups of $G$ has been obtained respectively in  \cite{MB-full} and \cite{LB-MB-solv}. Using these results, we show that every faithful action of $G$ whose growth is close to the growth of the natural defining action of $G$ (respectively on the Cantor set or on the bouquet of rays), must be ``almost'' conjugate to it. A common feature of the two situations is that the group $G$ contains elements whose support is a very sparse subset of  the Schreier graph of the natural action. We exploit this fact and the previous result about actions of small growth of $G$ to show that in the graph for any faithful action of $G\ast H$, there must be short jumps between regions that are far in the graph of the restricted action of $G$. This is that phenomenon that forces an additional factor $n$ in the growth. The detailed proofs are given in Section \ref{sec-gap-free-prod}.

It is worth comparing the case of the Grigorchuk group in Proposition \ref{prop-intro-grig} with Theorem \ref{t-intro-full}. The Grigorchuk group and topological full groups share various common features: in particular they appear through a micro-supported action on a compact space, a condition which plays an important role in the study of confined subgroups (indeed the confined subgroups of the Grigorchuk group are also understood \cite{LB-MB-comm-lemma}). Also Theorem \ref{t-intro-full} is applicable to many fragmentations of dihedral groups in the sense of Nekrashevych \cite{Nek-frag} (a family of groups to which the Grigorchuk group also belongs). Despite these similarities, here they exhibit an opposite behaviour. The main reason for this difference is the fact that the action of the Grigorchuk group on the vertices of the binary tree satisfies a condition that we call orbits of controlled diameter (Definition \ref{defin-m-control}), which means that every non-trivial element of the group must move some vertex of the tree by a distance comparable to the total diameter of the Schreier graph of the action on the corresponding level of the tree (Proposition \ref{p-Grigorchuk-linear-displacement}). Although the proof of this fact is not difficult, it relies on rather specific features of the Grigorchuk group. 

\subsection*{Acknowledgements} We thank the referees for their careful reading of the paper.


%

\bigskip

%
%


%


\section{Stability results}

\subsection{Graph products} \label{subsec-def-graph-prod}

Let $I$ be a set, and let $\Delta$ denote the diagonal in $I^2$. Let $c$ be a map $I^2 \setminus \Delta \to \left\lbrace 0,1 \right\rbrace$ such that $c(i,j) = c(j,i)$ for all $i \neq j$. Note the data of $c$ is equivalent to the data of an undirected graph with vertex set $I$ with no loop and simple edges. If $(G_i)_{i \in I}$ a family of groups indexed by $I$, the \textbf{graph product} $P =\mathsf{GP}((G_i)_{I}; c)$ of the family $(G_i)_{i \in I}$ associated to $c$ is the quotient of the free product $\Asterisk_I G_i$ by the relations $[G_i,G_j]$ for all $i,j$ such that $c(i,j)=1$. We say that $P$ is a \textbf{finite graph product} if $I$ is finite.

%

\subsection{Preliminaries}


\begin{defin}
	Let $q \geq 1$.	Let $(X_1,\ldots,X_q) = (X_k)_{k \leq q}$ be a $q$-tuple of graphs, and for every $k \leq q$ let $(e_k,s_k)$ be two distinct points of $X_k$. The \textbf{gluing} of the $q$-tuple $(X_k,e_k,s_k)_{k \leq q}$ is the graph obtained by taking the disjoint union $X_1 \sqcup \cdots \sqcup X_q$, and identifying $s_k$ and $e_{k+1}$ for every $1 \leq k \leq q-1$. It is denoted $\mathcal{G}((X_k,e_k,s_k)_{k \leq q})$. 
\end{defin}

(The points $e_1$ and $s_q$ do not play any role, but we keep two based points in  $X_1$ and $X_q$ as well in order to simplify notation.) Note that for every $k \leq q$ the map from $X_k$ to  $\mathcal{G}((X_k,e_k,s_k)_{k \leq q})$ is a graph isomorphism onto its image. In the sequel we identify $X_k$ with its image in $\mathcal{G}((X_k,e_k,s_k)_{k \leq q})$. Note also that the condition $e_k \neq s_k$ ensures that the images of $X_k$ and $X_{k+r}$ in $\mathcal{G}((X_k,e_k,s_k)_{k \leq q})$ are disjoint for all $r \geq 2$.


\begin{lem} \label{lem-growth-gluing}
	Let $(X_k)_{k \leq q}$ be a $q$-tuple of graphs, and suppose that every ball of radius $n \geq 1$ in $X_k$ has cardinality at most $f(n)$ for every $k \leq q$. Then every ball of radius $n$ in $\mathcal{G}((X_k,e_k,s_k)_{k \leq q})$ has cardinality at most $(2n+1) \cdot f(n)$. 
\end{lem}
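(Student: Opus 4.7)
The plan is to fix a vertex $v \in Y := \mathcal{G}((X_k,e_k,s_k)_{k \leq q})$ and control the ball $B := B_Y(v,n)$ by decomposing $B = \bigcup_k (B \cap X_k)$. I aim to show (i) $|B \cap X_k| \leq f(n)$ for every $k$, and (ii) only at most $2n+1$ of the pieces contribute; multiplying then gives the desired bound.

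For (i), the key observation is that the gluing is linear: $X_k$ is joined to $X_{k-1}$ only at $e_k$ and to $X_{k+1}$ only at $s_k$, so $X_k$ acts as a cut between the pieces of smaller and larger index. Fixing $k_0$ with $v \in X_{k_0}$, any shortest path in $Y$ from $v$ to a vertex $w$ of $X_k$ must enter $X_k$ through a specific glue point ($e_k$ if $k > k_0$, $s_k$ if $k < k_0$, and for $k = k_0$ any excursion out of $X_{k_0}$ leaves and re-enters through the same glue point and can be removed). This yields $d_Y(v,w) = d_Y(v,c_k) + d_{X_k}(c_k, w)$, giving a center $c_k \in \{v, e_k, s_k\}$ with $B \cap X_k \subseteq B_{X_k}(c_k, n)$, of cardinality at most $f(n)$ by hypothesis.

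For (ii), since $d_{X_i}(e_i, s_i) \geq 1$ (as $e_i \neq s_i$), the distance from $v$ to any vertex of $X_{k_0+j}$ grows at least linearly in $|j|$, so the relevant indices are confined to $|k - k_0| \leq n+1$. The subtle step is that the two extreme indices $k_0 \pm (n+1)$, when attained, force equality throughout the chain of distance inequalities: for $k = k_0+n+1$ this means $v = s_{k_0}$ and each $d_{X_{k_0+i}}(e_{k_0+i}, s_{k_0+i}) = 1$, so that $B \cap X_{k_0+n+1}$ reduces to the single glue point $e_{k_0+n+1} = s_{k_0+n}$ already contained in $X_{k_0+n}$. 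After absorbing these extremes into their neighbors, $B$ is covered by at most $2n+1$ of the $X_k$'s, and summing the piecewise estimates gives $|B| \leq (2n+1)f(n)$.

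The main obstacle is this bookkeeping in (ii): a naive count of non-empty intersections allows as many as $2n+3$ indices, and the improvement to $2n+1$ requires inspecting the equality case of the distance bounds to show that the two outermost contributions consist of shared glue points that are already counted in the adjacent pieces.
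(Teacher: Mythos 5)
Your proof is correct and follows essentially the same route as the paper's: decompose the ball $B$ into its intersections with the pieces $X_k$, observe each intersection lies in a ball of radius $n$ inside $X_k$ (hence has size at most $f(n)$), and show at most $2n+1$ pieces are needed to cover $B$. The paper dispatches this in two lines by simply asserting the count of $2n+1$ intersected pieces; your extra bookkeeping on the extreme indices (absorbing the boundary glue points into the neighbouring pieces) is a careful justification of exactly that assertion, not a different argument.
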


\begin{proof}
	Every ball $B$ of radius $n$ in $\mathcal{G}((X_k,e_k,s_k)_{k \leq q})$  intersects at most $2n+1$ members of $(X_k)_{k \leq q}$, and is covered by $2n+1$ balls of radius at most $n$ within these $X_k$. The statement follows.
\end{proof}



In the sequel we fix a set $I$, a function $c:I^2 \setminus \Delta \to \left\lbrace 0,1 \right\rbrace$ such that $c(i,j) = c(j,i)$ for all $i \neq j$, and a family of graphs $\F$ such that $\F$ admits a partition indexed by $I$, with blocks denoted $\F_i$, $i \in I$. For $X \in \F$, we write $i(X)$ for the unique element of $I$ such that $X \in \F_{i(X)}$. 

\begin{defin} \label{defin-c-admissible}
A $q$-tuple $(X_k)_{k \leq q}$ of elements of $\F$ is \textbf{$c$-admissible} if $i(X_k) \neq i(X_{k+1})$ and $c(i(X_k),i(X_{k+1})) = 0$ for every $k=1,\dots,q-1$. By extension we also say that $(X_k,e_k,s_k)_{k \leq q}$ is $c$-admissible if $(X_k)_{k \leq q}$ is $c$-admissible.
\end{defin}

\begin{defin}
	We let $\mathcal{G}(\F,c)$ be the disjoint union of all $\mathcal{G}((X_k,e_k,s_k)_{k \leq q})$, where $q \geq 1$ is any positive integer, $(X_k)_{k \leq q}$ is any $c$-admissible $q$-tuple, and $(e_k,s_k)_{k \leq q}$ is any sequence such that $e_k,s_k$ are distinct elements of $X_k$ for every $k \leq q$.
\end{defin}


Now suppose that $(G_i)_{i \in I}$ is a family of groups, and for every $i \in I$ there is an action of $G_i$ on $X$ for every $X \in \F_i$. Let $P = \mathsf{GP}((G_i)_{I}; c)$ be the graph product of $(G_i)_I$ associated to $c$. For every $c$-admissible $(X_k,e_k,s_k)_{k \leq q}$, there is a natural $G_i$-action on $\mathcal{G}((X_k,e_k,s_k)_{k \leq q})$ that extends the $G_i$-action on each $X_k$ such that $i(X_k) = i$, that is defined by declaring that $G_i$ acts trivially on $X_{k} \setminus \left\lbrace e_{k},s_k \right\rbrace $ for every $k$ such that $i(X_k) \neq i$. This is well defined because of the property that no two consecutive $X_k$ can be in $\F_i$, guaranteed by the assumption $i(X_k) \neq i(X_{k+1})$ in Definition \ref{defin-c-admissible}. This defines an action of the free product $\Asterisk_I G_i$ on $\mathcal{G}((X_k,e_k,s_k)_{k \leq q})$. The fact that $(X_k,e_k,s_k)_{k \leq q}$ is $c$-admissible ensures that if $c(i,j)=1$, then the groups $G_i$ and $G_j$ act on $\mathcal{G}((X_k,e_k,s_k)_{k \leq q})$ with disjoint support. Hence the action of $\Asterisk_I G_i$ on $\mathcal{G}((X_k,e_k,s_k)_{k \leq q})$ factors through the graph product $P = \mathsf{GP}((G_i)_{I}; c)$.

\begin{remark} \label{rmk-graph-free-prod}
	Suppose for every $i \in I$ the group $G_i$ is generated by a subset $S_i$, and that the graph structure on each $X \in \F_i$ is the Schreier graph of the $G_i$-action on $X$. Then the graph $\mathcal{G}((X_k,e_k,s_k))$  is essentially the Schreier graph of the action of $P = \mathsf{GP}((G_i)_{I}; c)$ associated to the generating subset $\bigcup_I S_i$. The only difference is that loops should be added at the places where the action of the $G_i$'s has been extended in a trivial way.
\end{remark}

\begin{lem} \label{lem-faithful-gluing}
	Suppose that for every $i \in I$, the $G_i$-action on $\bigsqcup_{X \in \F_i} X$ is faithful. Then the action of the graph product $P = \mathsf{GP}((G_i)_{I}; c)$ on $\mathcal{G}(\F,c)$ is faithful. 
\end{lem}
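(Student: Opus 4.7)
The plan is to show that every non-trivial element $g \in P$ moves some point of $\mathcal{G}(\F,c)$. I will invoke the normal form theorem for graph products (due to Green) and write $g = h_1 h_2 \cdots h_q$ as a reduced expression with $h_k \in G_{j_k} \setminus \{1\}$ and $j_k \neq j_{k+1}$ for all $k$. By the faithfulness hypothesis on each $G_{j_k}$-action on $\bigsqcup_{X \in \F_{j_k}} X$, for every $k$ I select some $X_k \in \F_{j_k}$ and a point $a_k \in X_k$ with $h_k \cdot a_k \neq a_k$.

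The core of the argument is to produce a $c$-admissible tuple built from the $X_k$'s, together with basepoints, such that tracing the action of $g$ on a well-chosen point yields a distinct endpoint. I start from a point of the gluing (for instance a gluing point $e_{k_0} = s_{k_0-1}$ between two adjacent pieces, or an interior point $a_k$), and apply the syllables $h_q, h_{q-1}, \ldots, h_1$ in succession. At each step, the syllable $h_m \in G_{j_m}$ moves the running image only if it lies in a piece $X_\ell$ with $i(X_\ell) = j_m$, and otherwise fixes it by construction of the action on the gluing. Tracking which piece hosts the running image throughout the trace should show that the endpoint differs from the start, with the basepoints chosen generically enough to avoid accidental coincidences (e.g., arranging that $h_k \cdot s_k \neq s_k$ when needed, which is possible after enlarging or re-selecting $X_k$ inside $\F_{j_k}$).

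The main obstacle is the interaction between the combinatorics of reduced expressions and the $c$-admissibility condition: the sequence $(j_1, \ldots, j_q)$ from a reduced expression need not itself be $c$-admissible, since adjacent commuting syllables can satisfy $c(j_k, j_{k+1}) = 1$. My first attempt will be to shuffle commuting syllables into a $c$-admissible ordering, which preserves the reduced expression. When no such shuffle is possible, I will fall back on the natural retractions $\pi_i \colon P \to G_i$ (defined by killing $G_j$ for $j \neq i$): if $\pi_i(g) \neq 1$ for some $i$, the length-one gluing $(X)$ with $X \in \F_i$ already detects $g$, because the $P$-action on the interior of a single piece of $\F_i$ factors through $\pi_i$. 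The hardest case is when all $\pi_i(g) = 1$ and no shuffle produces a $c$-admissible expression; there I plan to intercalate ``spacer'' pieces $Y \in \F_l$ between incompatible neighbors (with $l$ chosen so that $c(l, j_k) = c(l, j_{k+1}) = 0$), and to carry out the tracing argument on the enlarged tuple. The bookkeeping in this last case, and the verification that the trace actually ends in a piece disjoint from the starting one, is where most of the technical work will lie.
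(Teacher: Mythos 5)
Your reduction to the two easy cases is fine: an admissible shuffle of a reduced expression does make the tracing argument work, and if some retraction satisfies $\pi_i(g)\neq 1$ then a single piece of $\F_i$ detects $g$. The genuine gap is in the fallback you propose for the remaining case. First, a spacer index $l$ with $c(l,j_k)=c(l,j_{k+1})=0$ need not exist: take $P=(G_1\ast G_2)\times(G_3\ast G_4)$, i.e.\ $c(1,2)=c(3,4)=0$ and $c=1$ for all cross pairs, and $g=[h_1,h_2]\,[h_3,h_4]$ with $h_i\in G_i$ non-trivial. Every reduced expression of $g$ mixes the two sides, so no shuffle is $c$-admissible, all retractions $\pi_i(g)$ are trivial, and at the place where the word crosses from a side-$\{1,2\}$ syllable to a side-$\{3,4\}$ syllable any candidate spacer would need $c(l,j_k)=0$ (forcing $l\in\{1,2\}$) and $c(l,j_{k+1})=0$ (forcing $l\in\{3,4\}$), which is impossible. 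Second, even when a spacer $Y\in\F_l$ does exist it cannot be traversed: in the glued chain the only elements that can carry a point of $Y$ from its entry gluing point to its exit gluing point are elements of $G_l$, and $g$ has no syllable of $G_l$ at that position (in the hard case possibly none at all); the traced point therefore stalls at the point where $Y$ is attached, later syllables of $G_{j_k}$ can still move it, and your intended conclusion that the trace ends in a piece disjoint from the starting one is no longer available.

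The paper's proof resolves exactly this case, and by doing the opposite of inserting pieces: it deletes the problematic syllables from the chain rather than padding between them. Fix a decomposition $g=g_n\cdots g_1$ into non-trivial syllables with $n$ minimal, and extract greedily $r_1=1$ and $r_{k+1}=$ the first index after $r_k$ whose syllable group does not commute with $G_{i(r_k)}$. Only the syllables $g_{r_k}$ are detected, by choosing $X_k\in\F_{i(r_k)}$ and $e_k$ with $s_k=g_{r_k}(e_k)\neq e_k$; the greedy choice makes $(X_k,e_k,s_k)_{k\le q}$ automatically $c$-admissible, and minimality of $n$ together with the greedy choice guarantees that every skipped syllable between $r_k$ and $r_{k+1}$ lies neither in $G_{i(r_k)}$ nor in $G_{i(r_{k+1})}$, hence fixes the gluing point $s_k$, so the full product still sends $e_1$ to $s_q\neq e_1$. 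In the $F_2\times F_2$ example this amounts to tracing only the $[h_1,h_2]$ part and checking that the $[h_3,h_4]$ syllables fix the endpoint. Without this step of dropping the commuting syllables and verifying that they fix the traced point (or an equivalent device), your outline cannot be completed as stated.
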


\begin{proof}
	Let $g$ be a non-trivial element of $P$. Consider the decompositions of the form \[ g = g_n \cdots g_{1},  \] where for each $s$ we have that $g_{s}$ is non-trivial and there is a (necessarily unique) $i_s \in I$ such that $g_{s} \in G_{i_s}$. Among all such decompositions, we choose one such that $n$ is minimal. Since $g$ is non-trivial, $n \geq 1$. We define a sequence $(r_1,\ldots,r_q)$ inductively by setting $r_1 = 1$, and defining $r_{k+1}$ as the smallest $s \geq r_k +1$ such that $c(i(s),i(r_k)) = 0$ (equivalently, $G_{i(s)}$ does not commute with $G_{i(r_k)}$). We also define $r_{q+1} = n+1$. Minimality of $n$ implies that for every $1 \leq k \leq q$ and every $r_k + 1 \leq s \leq r_{k+1}-1$, we have $g_s \notin G_{i(r_k)}$. Let $\gamma_k = g_{r_{k+1}-1} \cdots g_{r_k}$ for $1 \leq k \leq q$, so that $g = \gamma_q \cdots \gamma_1$.  
	
	Since $G_i$ acts faithfully on $\bigsqcup_{X \in \F_i} X$ for every $i \in I$, for every $k \leq q$ one can find $X_k \in \F_{i_{r_k}}$ and  $x_k \in X_{k}$ such that $g_{r_k}(x_k) \neq x_k$. We set $e_{k} = x_k$, $s_{k} = g_{r_k}(x_k)$. We look at how the element $g$ acts on $\mathcal{G}((X_k,e_k,s_k)_{k \leq q})$. Since $g_s$ does not belong to  $G_{i(r_k)}$ for every $r_k + 1 \leq s \leq r_{k+1}-1$, $g_s$ acts trivially on $s_k$. Hence $\gamma_k(e_k) = s_k$ for every $k \leq q$, and hence it follows that $g(e_1) = s_q$. So if $q=1$ then $g$ acts non-trivially since $e_1 \neq s_1$. If $q \geq 2$ then $X_q$ and $X_1$ are either disjoint (when $q \geq 3$), or intersect only along $s_1 = e_2$ if $q=2$. Hence in all cases the element $g$ moves $e_1$. So for every non-trivial element $g$ of $P$, there exists a $\mathcal{G}((X_k,e_k,s_k)_{k \leq q})$ on which $g$ acts non-trivially. So $P$ acts faithfully on $\mathcal{G}(\F,c)$.
\end{proof}

Recall that the class $\Class(f)$ has been defined in the introduction (Definition \ref{defin-class}). 

\begin{prop} \label{prop-largebound-graph-prod}
	Let $f \colon \N\to \N$, and let $(G_i)_I$ be a finite collection of groups in the class $\Class(f)$. Then any graph product $P = \mathsf{GP}((G_i)_{I}; c)$ belongs to $\Class(g)$ with $g(n) = n f(n)$.
\end{prop}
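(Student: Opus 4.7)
My plan is to build the desired $P$-action by feeding the data coming from the hypothesis $G_i \in \Class(f)$ directly into the gluing construction developed above. For each $i \in I$, fix a faithful $G_i$-set $Y_i$ with $\vol_{G_i, Y_i}(n) \preccurlyeq f(n)$, together with a finite symmetric generating set $S_i$ of $G_i$; these determine the Schreier graph structure on $Y_i$. Let $\F_i$ be the collection of those connected components of $\Gamma(G_i, Y_i)$ that have at least two vertices, and let $\F = \bigsqcup_{i \in I} \F_i$, partitioned by the $\F_i$. The vertices we discard are global fixed points of $G_i$, so the $G_i$-action on $\bigsqcup_{X \in \F_i} X$ is still faithful, and each $X \in \F$ has at least two vertices, which is all we need in order to pick distinct basepoints $e_k \neq s_k$ in the gluing construction.

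Next I would consider the $P$-set $\mathcal{G}(\F, c)$ equipped with the action introduced in \S\ref{subsec-def-graph-prod}, which factors through $P$ because of the $c$-admissibility condition. The faithfulness of each $G_i$-action on $\bigsqcup_{X \in \F_i} X$ is exactly the hypothesis of Lemma \ref{lem-faithful-gluing}, so that lemma immediately yields that the $P$-action on $\mathcal{G}(\F, c)$ is faithful.

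For the volume estimate, I would use finiteness of $I$ to select a single constant $C \geq 1$ such that $\vol_{G_i, Y_i}(n) \leq C f(Cn)$ for every $i$; then every ball of radius $n$ in every $X \in \F$ contains at most $C f(Cn)$ vertices. Lemma \ref{lem-growth-gluing} then bounds any ball of radius $n$ inside any connected component $\mathcal{G}((X_k, e_k, s_k)_{k \leq q})$ of $\mathcal{G}(\F, c)$ by $(2n+1) C f(Cn)$, which is $\preccurlyeq n f(n) = g(n)$. By Remark \ref{rmk-graph-free-prod}, the Schreier graph of the $P$-action on $\mathcal{G}(\F, c)$ for the generating set $\bigcup_{i \in I} S_i$ of $P$ differs from the graph $\mathcal{G}(\F, c)$ only by loops added at the gluing points, which do not affect ball sizes. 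Hence $\vol_{P, \mathcal{G}(\F, c)}(n) \preccurlyeq n f(n)$, proving $P \in \Class(g)$. I do not foresee any real obstacle: the gluing construction and the two supporting lemmas have been tailored precisely to this statement, and the only mildly non-automatic point is the extraction of a uniform constant $C$, which is where finiteness of $I$ enters.
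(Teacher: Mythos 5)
Your proof is correct and takes essentially the same route as the paper, which applies Lemma \ref{lem-growth-gluing}, Lemma \ref{lem-faithful-gluing} and Remark \ref{rmk-graph-free-prod} to $\F_i = \{X_i\}$ with $X_i$ a faithful $G_i$-set of growth $\preccurlyeq f(n)$. Your only (harmless) deviation is to take $\F_i$ to be the connected components of $\Gamma(G_i,Y_i)$ with at least two vertices instead of the whole graph, which changes nothing substantive in the argument.
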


\begin{proof}
For each $i \in I$ there is a $G_i$-set $X_i$ such that the $G_i$-action on $X_i$ is faithful and verifies $\vol_{G_i, X_i}(n) \preccurlyeq f(n)$. The statement then follows from Lemma \ref{lem-growth-gluing} and Lemma \ref{lem-faithful-gluing} (and Remark \ref{rmk-graph-free-prod}), applied to  $\F_i = \left\lbrace X_i \right\rbrace $ and $\F =  \bigsqcup_I \F_I$.
\end{proof}

\subsection{A subclass closed under graph products} \label{subsec-subclass-stable}

In this section we prove that, under a mild assumption on the function $f$, a certain subclass of $\Class(f)$ is closed under finite graph products (Proposition \ref{prop-large-displ-stable}). The definition of this subclass is inspired by the work of the third author \cite{Salo-graph-prod}, and Proposition \ref{prop-large-displ-stable} elaborates on \cite[Theorem 3]{Salo-graph-prod}. In particular Proposition \ref{prop-large-displ-stable} provides an elementary proof of the result from \cite{Salo-graph-prod} that finitely generated right-angled Artin groups admit a faithful action with linear growth. (Actually, Theorem 3 in \cite{Salo-graph-prod} is stronger than that).

For an increasing function $f: \R_+ \to \R_+$, consider the following condition: \begin{equation} \label{partition} \exists C_1 > 0 \, ; \, \forall k \geq 1 \, \, \forall \rho_1,\ldots,\rho_k \geq 0, \, \sum_{i \leq k} f(\rho_i) \leq C_1 f \left(C_1 \sum_{i \leq k} \rho_i \right). \end{equation}

It is a rather mild condition. A sufficient condition for \eqref{partition} to hold is that $n \mapsto f(n) /n$ is increasing. For instance  $f(n) = n^\alpha$ satisfies \eqref{partition} for every $\alpha \ge 1$.


\begin{lem} \label{lem-growth-m-control}
	Let $(X_k)_{k \leq q}$ be a $q$-tuple of finite graphs, and $e_k,s_k$ distinct elements of $X_k$ for every $k \leq q$. Suppose that there is an increasing function $f: \R_+ \to \R_+$ and $C > 0$ such that: \begin{enumerate}
		\item \label{growth-Xk} every ball of radius $n$ in $X_k$ has cardinality at most $C f(Cn)$ for every $k \leq q$.
		\item $f$ satisfies \eqref{partition}.
		\item \label{diam-Xk} $\mathrm{diam}(X_k) \leq C d(e_k,s_k)$ for every $k \leq q$.
	\end{enumerate}
	Then there is a constant $C'> 0$ depending only on $C$ and $f$ such that every ball of radius $n$ in $\mathcal{G}((X_k,e_k,s_k)_{k \leq q})$ has cardinality at most $C'  f(C' n)$. 
\end{lem}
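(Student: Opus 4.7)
The plan is to refine the naive estimate of Lemma \ref{lem-growth-gluing} by exploiting hypothesis (3): traversing any block $X_k$ from $e_k$ to $s_k$ costs at least $d_k := d_{X_k}(e_k,s_k) \geq \mathrm{diam}(X_k)/C$ steps. Consequently, the total ``radius budget'' consumed while crossing blocks that meet a ball of radius $n$ is itself $O(n)$. Condition \eqref{partition} then lets me collapse the sum of growth contributions from all visited blocks into a single term of the form $f(O(n))$.

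Concretely, fix a vertex $v \in X_{k_0}$ and let $B$ denote the ball of radius $n$ around $v$ in $\mathcal{G}((X_k,e_k,s_k)_{k\leq q})$. Since $B$ is connected, the indices $k$ with $X_k \cap B \neq \emptyset$ form a contiguous interval $\{k_0-a,\dots,k_0+b\}$. Any path from $v$ to a point of $X_{k_0+j}$ with $j \geq 1$ must pass through the gluing vertices $s_{k_0},s_{k_0+1},\dots,s_{k_0+j-1} = e_{k_0+j}$, so
$$d(v, e_{k_0+j}) = d_{X_{k_0}}(v,s_{k_0}) + \sum_{i=1}^{j-1} d_{k_0+i}.$$
Thus $B \cap X_{k_0+j}$ is contained in the ball of radius $n_j := \max(0, n - d(v,e_{k_0+j}))$ around $e_{k_0+j}$ inside $X_{k_0+j}$. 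Setting $\rho_j := \min(n_j, \mathrm{diam}(X_{k_0+j}))$ for $j \neq 0$, with a symmetric definition for $j < 0$, and $\rho_0 := n$ for the central block, hypothesis (1) yields $|B \cap X_{k_0+j}| \leq C f(C\rho_j)$ for every $j$.

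The key step is then the estimate $\sum_j \rho_j \leq (2C+1)\,n$. For $1 \leq j < b$ the ball reaches past $s_{k_0+j}$, so $n_j \geq d_{k_0+j}$, whence $\rho_j \leq \mathrm{diam}(X_{k_0+j}) \leq C d_{k_0+j}$ by (3); for the extremal block, $\rho_b \leq n_b$. The displayed identity above gives $\sum_{j=1}^{b-1} d_{k_0+j} + n_b \leq n$, so $\sum_{j=1}^{b} \rho_j \leq C n$. A symmetric bound holds on the left, and $\rho_0 \leq n$, proving the claim. Finally, applying condition \eqref{partition} to the finite sequence $(C\rho_j)_j$,
$$|B| \leq \sum_j |B \cap X_{k_0+j}| \leq C \sum_j f(C\rho_j) \leq C C_1\, f\!\Bigl(C_1 C \sum_j \rho_j\Bigr) \leq C' f(C' n),$$
for a constant $C'$ depending only on $C$ and the constant $C_1$ from \eqref{partition}.

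The main obstacle, minor but non-cosmetic, is the treatment of the two extremal blocks $X_{k_0-a}$ and $X_{k_0+b}$: the ball does not traverse them entirely, so one cannot replace $\rho_j$ by $Cd_{k_0+j}$ and is forced to keep the term $n_j$ directly. Careful bookkeeping shows that these terms are nevertheless absorbed into the bound $\sum_j \rho_j \leq (2C+1)n$, after which \eqref{partition} finishes the argument.
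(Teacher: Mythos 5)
Your argument is correct and is essentially the paper's own proof: decompose the ball along the chain of blocks, bound each interior block by its full cardinality $\le Cf(C\,\mathrm{diam}(X_k))\le Cf(C^2 d(e_k,s_k))$, observe that the traversal distances $d(e_k,s_k)$ of the interior blocks sum to at most $n$, and invoke \eqref{partition}. The only (cosmetic) difference is that you fold the central and two extremal blocks into the same application of \eqref{partition} via the truncated radii $\rho_j$, whereas the paper simply bounds those three contributions by $3Cf(Cn)$ and applies \eqref{partition} separately to the left and right interior blocks.
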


\begin{proof}
	Let $C_1$ be as in \eqref{partition}. Let $n \geq 1$. Let $x$ be a vertex in $\mathcal{G}((X_k,e_k,s_k)_{k \leq q})$, and let $\ell$ be such that $x$ belongs to $X_\ell$. Let $B(n)$ denote the ball of radius $n$ around $x$ in $\mathcal{G}((X_k,e_k,s_k)_{k \leq q})$. Let $X_{\ell - \ell_1}, \ldots X_\ell, \ldots X_{\ell + \ell_2}$ be the members of $(X_k)_{k \leq q}$ that $B(n)$ intersects. Certainly we have \[ d(e_{\ell+1},s_{\ell+1}) +   \cdots + d(e_{\ell + \ell_2 - 1},s_{\ell + \ell_2 - 1}) \leq n  \] and \[ d(e_{\ell-1},s_{\ell-1}) +   \cdots + d(e_{\ell - \ell_1 + 1},s_{\ell - \ell_1 + 1}) \leq n,  \] and $B(n)$ is contained in the union of $X_{\ell - \ell_1+1}, \ldots, X_{\ell-1}, X_{\ell+1}, \ldots, X_{\ell + \ell_2-1}$ and the intersection between $B(n)$ and $X_\ell \cup X_{\ell - \ell_1} \cup X_{\ell + \ell_2}$: \[ |B(n)| \leq \sum_{k=1}^{\ell_1-1} |X_{\ell-k}| + \sum_{k=1}^{\ell_2-1} |X_{\ell+k}| + |B_{X_{\ell}}(x,n)|  + |B_{X_{\ell - \ell_1}}(s_{\ell - \ell_1},n)| + |B_{X_{\ell + \ell_2}}(e_{\ell + \ell_2},n)|. \]  Conditions  \ref{growth-Xk} and \ref{diam-Xk} ensure $|X_k| \leq C f(C \mathrm{diam}(X_k)) \leq C f(C^2d(e_k,s_k))$ for every $k \leq q$. Hence we obtain \begin{align*}  |B(n)| & \leq C \sum_{k=1}^{\ell_1-1} f(C^2 d(e_{\ell-k},s_{\ell-k})) + C \sum_{k=1}^{\ell_2-1} f(C^2d(e_{\ell+k},s_{\ell+k})) + 3Cf(Cn) \\ & \leq C_1 C f \left(C_1 C^2 \sum_{k \leq \ell_1-1} d(e_{\ell-k},s_{\ell-k}) \right) +  C_1 C f \left(C_1 C^2 \sum_{k \leq \ell_2-1} d(e_{\ell+k},s_{\ell+k}) \right) + 3Cf(Cn) \\ & \leq 2C_1 C f(C_1 C^2n) + 3Cf(Cn),\\
	\end{align*}
	where in the second inequality we have used that $f$ satisfies \eqref{partition}, and the third inequality follows from the above upper bounds $\sum d(e_{\ell-k},s_{\ell-k}), \sum d(e_{\ell+k},s_{\ell+k}) \leq n $.
\end{proof}



\begin{defin} \label{defin-m-control}
	Let $G$ be a finitely generated group, and $X$ a $G$-set. We say that the $G$-action on $X$ has \textbf{orbits of controlled diameter} if all the $G$-orbits in $X$ are finite, and if there is $C > 0$ such that for every non-trivial element $g \in G$, there is a $G$-orbit $O$ in $X$ and $x \in O$ such that $gx \neq x$ and $\mathrm{diam}(\Gamma(G,O)) \leq C d(x,gx)$. 
\end{defin}

Here by $\Gamma(G,O)$ we mean the Schreier graph of the $G$-action on $O$ (which is a connected component in the larger graph $\Gamma(G,X)$), with respect to a finite symmetric generating set $S$. It is easily checked that this condition does not depend on the choice of $S$ (although the constant $C$ can depend on $S$). Note that the definition forces in particular the group $G$ to be residually finite.

\begin{defin}
	We denote by $\Class_{\mathrm{CD}}(f) \subset \Class(f) $ the class of groups admitting an action with orbits of controlled diameter and of growth at most $f(n)$.
\end{defin}

\begin{example} \label{e-LD} \noindent
	\begin{enumerate}
		\item Every faithful action of a finite group has orbits of controlled diameter.
		\item The action of $\Z$ on $\bigsqcup_{n \geq 1} \Z / n \Z$  has orbits of controlled diameter. Hence $\Z$ belongs to $\Class_{\mathrm{CD}}(\mathrm{lin})$.
		\item If $G_1, \ldots, G_k$ admit actions with orbits of controlled diameter respectively on $X_1, \ldots, X_k$, then the action of $G_1 \times \cdots \times G_k$ on $X_1 \sqcup \cdots \sqcup X_k$ has orbits of controlled diameter. Hence each class $\Class_{\mathrm{CD}}(f)$ is closed under direct products. 
	\end{enumerate}
\end{example}

Here is a slightly more elaborated example:

\begin{prop} \label{prop-wreath-LD}
For every $d \ge 1$ and $p \geq 2$, the wreath product $G=(\Z/p\Z)\wr \Z^d$  belongs to $\Class_{\mathrm{CD}}(f)$ for $f(n)=n^d$. 
\end{prop}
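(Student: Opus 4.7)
The plan is to build a single faithful $G$-action $X$ by taking the disjoint union of ``rescaled'' versions of the natural defining action. Writing $a$ for the generator of the copy of $\Z/p\Z$ at $0\in\Z^d$ and $t_1,\dots,t_d$ for the standard generators of $\Z^d$, I would set, for each $m\ge 1$,
\[
Y_m \;=\; (\Z/m\Z)^d \times \Z/p\Z,
\]
where each $t_i$ translates the first coordinate by $e_i$ and $a$ acts by $(0,k)\mapsto(0,k+1)$ at the origin of the torus, fixing everything with first coordinate $\neq 0$. Since the conjugates $t^w a t^{-w}$ have pairwise disjoint supports in $Y_m$, they commute, and this extends to a $G$-action. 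Let $X=\bigsqcup_{m\ge 1}Y_m$ with the diagonal action.

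The Schreier graph of $Y_m$ is obtained by gluing $p$ copies of the torus $(\Z/m\Z)^d$ into a $p$-cycle passing through the origin of each copy, so every ball of radius $r$ is covered by at most $p$ balls of radius $r$ in $(\Z/m\Z)^d$; this yields $\vol_{G,X}(r)\le p(2r+1)^d\preccurlyeq n^d$. Faithfulness is immediate: for any nontrivial $g=(f,v)\in G$, the element $g$ acts nontrivially on every $Y_m$ with $m$ large enough for the reduction $\Z^d\to(\Z/m\Z)^d$ to be injective on $\supp(f)\cup\{v\}$.

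The substantive step is the verification of orbits of controlled diameter. Given a nontrivial $g=(f,v)$, I would attach the scale
\[
D(g)\;=\;\max\bigl(\|v\|_\infty,\ \max\{\|w\|_\infty:f(w)\neq 0\},\ 1\bigr),
\]
and pick $m=3D(g)$, noting that $\diam(Y_m)\le dm+p=O(D(g))$ and that $m>2D(g)$ prevents any wrap-around on the torus within the region where $g$ is active. Three cases arise: \emph{(i)} if $D(g)=0$ then $g=a^c$ with $c\neq 0$ and $g$ acts nontrivially on $Y_1$, of diameter $\lfloor p/2\rfloor$; \emph{(ii)} if $v\neq 0$, applying $g$ to $(0,0)$ gives $(v,f(v))$ at distance at least $\|v\|_1\ge D(g)$ from $(0,0)$, by the $1$-Lipschitz projection onto the torus; \emph{(iii)} if $v=0$ and $f\neq 0$, pick $w^*\in\supp(f)$ with $\|w^*\|_\infty=D(g)$ and observe that every path from $(w^*,0)$ to $g\cdot(w^*,0)=(w^*,f(w^*))$ must traverse an origin vertex (the only place where lamp-change edges exist), forcing displacement at least $2\|w^*\|_1\ge 2D(g)$. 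In each case the ratio $\diam(Y_m)/d(x,g\cdot x)$ is bounded by a constant depending only on $d$ and $p$, giving the controlled diameter condition.

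The obstacle I expect is case \emph{(iii)}: since $a$ itself displaces a point by only $1$, controlled diameter cannot possibly hold on any single orbit of large diameter, and one must exploit a mechanism that produces a large forced displacement for lamps located far from the origin. The choice of scale $m=3D(g)$ is precisely what arranges for the round trip through the origin required by any lamp change to match, up to a bounded factor, the overall diameter of the orbit $Y_m$.
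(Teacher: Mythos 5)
Your construction is in substance the same as the paper's: your $Y_m$ is exactly the coset space $G/H_m$ used there (lamp generator acting only over the origin of the torus, torus copies joined along a $p$-cycle of origins), and the growth estimate, faithfulness, and the case $v=0$ (forced round trip through the origin) all follow the same lines. The genuine problem is your case \emph{(ii)}. You fix a single scale $m=3D(g)$, where $D(g)$ also records the radius of $\supp(f)$, but you then bound the displacement only by the translation length, and the inequality $\|v\|_1\ge D(g)$ is simply false whenever the lamp configuration reaches farther than $v$. Concretely, for $d=1$ and $g=(\delta_N,1)$ with $N$ large (one lamp at position $N$, translation by $1$), you get $D(g)=N$, $m=3N$, $\diam(Y_m)\simeq N$, yet $g$ moves $(0,0)$ to a neighbouring vertex: the displacement at your chosen point is $1$, so neither the inequality nor the controlled-diameter estimate at that point holds.

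The gap is fixable in two ways. (a) Do what the paper does: when $v\neq 0$, decouple the scale from $\supp(f)$ and take $m$ comparable to $\|v\|$ (the paper takes $m=2|v|$); since the projection to the torus is equivariant, \emph{every} point of $Y_m$ is then displaced by at least $\|v\|_1$, which is comparable to $\diam(Y_m)$, and the large scale $m\simeq$ lamp radius is reserved for the case $v=0$. (b) Keep $m=3D(g)$ but move the witness point when the farthest lamp $w^*$ dominates $\|v\|_\infty$: evaluate $g$ at the point lying over $w^*-v$ with lamp value $0$; its image lies over $w^*$ with lamp value changed by $f(w^*)\neq 0$, so any path must pass through an origin vertex and has length at least $2\|w^*\|_1-\|v\|_1$, and combining this with the translation bound $\|v\|_1$ valid at every point gives a displacement at least $\max(\|v\|_1,\,2\|w^*\|_1-\|v\|_1)\ge\|w^*\|_1\ge D(g)$, up to constants. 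Two smaller points: the conjugates $t^wat^{-w}$ and $t^{w'}at^{-w'}$ do not have disjoint supports on $Y_m$ when $w\equiv w'\pmod m$ (their supports coincide); they still commute because they then act identically, or one can simply observe that $Y_m$ is the coset $G$-set $G/H_m$. And in case \emph{(iii)} with $\supp(f)=\{0\}$ the bound $2\|w^*\|_1\ge 2D(g)$ fails, but this is harmless since that situation is your case \emph{(i)}, where the relevant orbit has bounded diameter.
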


\begin{proof}
	We denote elements of $G$ as pairs $(f, u)$, with  $f\colon \Z^d\to \Z/p\Z$ of finite support and $u\in \Z^d$. For $m\ge 2$, set $H_m=\{(f, u) \colon \quad \sum_{v\in (m\Z)^d} f(v)=0, u\in (m\Z)^d\}$. Then $H_m$ is a finite index subgroup of $G$, of index $pm^d$. Set $X_m:=G/H_m$ and $X=\sqcup_m  X_m$. We check that the action of $G$ on $X$ has orbits of controlled diameter and satisfies $\vol_{G, X}(n)\simeq n^d$. Consider the standard generating set $S=\{(\delta_0, 0), (0, e_i), i=1, \dots d\}$, where $\delta_0\colon \Z^d\to \Z/p\Z$ takes the value 1 on $0$ and 0 elsewhere, and $e_1, \ldots, e_d$ is the standard basis of $\Z^d$. Note first that the map $G\to (\Z/p\Z)\times (\Z/m\Z)^d$ that sends $(f, u)\in G$  to $(\sum_{v\in m\Z^d} f(v), u \mod m)$ descends to a bijection of the coset space $G/H_m$ with $(\Z/p\Z)\times (\Z/m\Z)^d$, so that we may identify $X_m$ with $(\Z/p\Z)\times (\Z/m\Z)^d$. Under this identification, the action of $G$ on $X_m$ coincides with the standard wreath product action of the natural quotient $(\Z/p\Z) \wr (\Z/m\Z)^d$. Explicitly, the action of elements in the generating set $S$ is described as follows: the lamp generator $(\delta_0 , 0)$ permutes cyclically $(\Z/p\Z)\times \{0\}$ and acts trivially elsewhere; each element $(0, e_i)$ maps $(r, u \text{ mod } m)$ to $(r, u+e_i \text{ mod } m)$.  In particular the Schreier graph of the $G$-action on $X_m$ is isomorphic (ignoring loops) to the graph obtained taking  a cycle of length $p$ and gluing to each  point a copy of the standard Cayley graph of $(\Z/ m\Z)^d$.  It follows that $\vol_{G, X}(n)\simeq n^d$, and that $C_1m \le \mathrm{diam}(X_m)\le C_2 m$ for some constants $C_1, C_2$. To check that the action has orbits of controlled diameter, let $g=(f, v)$ be a non-trivial element of $G$. Suppose first that $v\neq 0$, and choose $m=2|v|$, where $|\cdot|$ denotes the standard word metric of $\Z^d$. Then, since the natural projection of $X_m$ to $(\Z/m\Z)^d$ is equivariant, for every $x\in X_m$ we have $d(x, gx) \ge |v|\ge \frac{1}{2}C_1\mathrm{diam}(X_m)$. Suppose now that $v=0$. Let $u$ be such that $f(u)\neq 0$ and with $|u|$ maximal, and choose again $m>2|u|$. Let $x\in X_m\simeq (\Z/p\Z)\times (\Z/m\Z)^d$ be the point $x=(0, u \text{ mod } m)$. Then $gx=(f(u),  u \text{ mod } m)$.   From the description of the graph $\Gamma(G, X_m)$ one can see that $d(x, gx)\ge 2|u| \ge C_1 \mathrm{diam}(X_m)$. 
\end{proof}


Our main motivation for considering the class $\Class_{\mathrm{CD}}(f)$ is the following:

\begin{prop} \label{prop-large-displ-stable}
	If $f: \R_+ \to \R_+$ is increasing and satisfies \eqref{partition}, then the class $\Class_{\mathrm{CD}}(f)$ is closed under finite graph products. 
\end{prop}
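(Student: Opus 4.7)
The strategy is to realize $P = \mathsf{GP}((G_i)_I; c)$ via the gluing construction of \S \ref{subsec-def-graph-prod}, but with the set of allowed basepoint pairs restricted so that hypothesis (3) of Lemma \ref{lem-growth-m-control} is enforced by construction. Since $I$ is finite, I would fix a single constant $C > 0$ such that each CD-action $G_i \acts X_i$ has growth $\preccurlyeq C f(Cn)$ and CD-constant $C$ simultaneously. For each $i$, let $\F_i$ be the collection of $G_i$-orbits in $X_i$ (each orbit is finite by the CD assumption), and for each orbit $O \in \F_i$ let $B_O\subset O\times O$ denote the set of pairs $(e,s)$ with $e\ne s$ and $\mathrm{diam}(O)\leq C d(e,s)$. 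The target $P$-set $Y$ is the sub-disjoint-union of $\mathcal{G}(\F,c)$ consisting of those glued graphs $\mathcal{G}((X_k,e_k,s_k)_{k\leq q})$ for which $(e_k,s_k) \in B_{X_k}$ for every $k$.

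Faithfulness of the $P$-action on $Y$ would then follow from the proof of Lemma \ref{lem-faithful-gluing}, with a single modification. Given a nontrivial $g \in P$ and its minimal decomposition, the CD property of $G_{i_{r_k}}$ supplies, for each block $g_{r_k}$, an orbit $X_k \in \F_{i_{r_k}}$ and a point $x_k \in X_k$ with $g_{r_k}(x_k) \ne x_k$ \emph{and} $\mathrm{diam}(X_k) \leq C d(x_k, g_{r_k}(x_k))$. Setting $e_k := x_k$ and $s_k := g_{r_k}(x_k)$ places $(e_k,s_k)$ in $B_{X_k}$, so the resulting glued graph is a component of $Y$, and the remainder of the argument of Lemma \ref{lem-faithful-gluing} shows $g$ acts nontrivially on that component.

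The growth estimate $\vol_{P, Y}(n) \preccurlyeq f(n)$ is then immediate from Lemma \ref{lem-growth-m-control}: every connected component of $Y$ verifies its three hypotheses, namely the uniform growth bound on each $X_k$, condition \eqref{partition} on $f$, and the diameter inequality (which holds by the restriction to pairs in $B_{X_k}$). Thus every ball of radius $n$ in each component has size at most $C' f(C' n)$ for a uniform $C'$.

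It remains to verify that the $P$-action on $Y$ has orbits of controlled diameter. Each $P$-orbit is a component $Y' = \mathcal{G}((X_k,e_k,s_k)_{k\leq q})$, necessarily finite; and because $X_k$ and $X_{k+r}$ are disjoint for $r\geq 2$ while $X_k$ and $X_{k+1}$ meet only at the single gluing point $s_k = e_{k+1}$, every path in $Y'$ from $e_1$ to $s_q$ must cross each gluing point, so $d_{Y'}(e_1,s_q) = \sum_k d_{X_k}(e_k,s_k)$. Combining this with $\mathrm{diam}(Y') \leq \sum_k \mathrm{diam}(X_k) \leq C\sum_k d_{X_k}(e_k,s_k)$ yields $\mathrm{diam}(Y') \leq C\, d_{Y'}(e_1,s_q)$. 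For any nontrivial $g \in P$, the component produced in the faithfulness step satisfies $g \cdot e_1 = s_q$ (or $g\cdot e_1 = s_1$ when $q=1$), which is exactly the point-displacement required by Definition \ref{defin-m-control}. I expect no serious obstacle here; the only technical point is aligning the witness component of Lemma \ref{lem-faithful-gluing} with the metric constraint built into $B_O$, which is precisely what the CD hypothesis is designed to make automatic.
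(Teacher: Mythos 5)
Your proposal is correct and follows essentially the same route as the paper's proof: restrict the gluing construction to components whose basepoint pairs satisfy $\mathrm{diam}(X_k)\leq C\,d(e_k,s_k)$, apply Lemma \ref{lem-growth-m-control} for the growth bound, and rerun the argument of Lemma \ref{lem-faithful-gluing} with the CD-witness points to get $g(e_1)=s_q$ together with $\mathrm{diam}\bigl(\mathcal{G}((X_k,e_k,s_k))\bigr)\leq C\,d(e_1,s_q)$. The only difference is that you spell out the identity $d(e_1,s_q)=\sum_k d(e_k,s_k)$, which the paper uses without comment.
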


\begin{proof}
	Let $(G_i)_I$ be a finite collection of groups in $\Class_{\mathrm{CD}}(f)$. For every $i \in I$ let $Y_i$ be a $G_i$-set such that the $G_i$-action on $Y_i$ has orbits of controlled diameter and growth at most $f(n)$. Let $S_i$ be a finite generating subset of $G_i$ and $C_i$ a constant as in Definition \ref{defin-m-control}. Let $P = \mathsf{GP}((G_i)_{I}; c)$ be a graph product, generated by $S = \bigcup S_i$. Set $C = \max_i C_i$. 
	
	For $i \in I$, let $\F_i$ be the set of $G_i$-orbits in $Y_i$, and $\F$ the disjoint union of the $\F_i$. We consider the subset of $\mathcal{G}(\F,c)$, denoted by $Y$, consisting of the disjoint union of all $\mathcal{G}((X_k,e_k,s_k)_{k \leq q})$, where $q \geq 1$, $(X_k)_{k \leq q}$ is a $c$-admissible $q$-tuple of elements of $\F$, and $e_k,s_k$ are distinct elements of $X_k$ with the condition that  $\mathrm{diam}(X_k)\leq C d(e_k,s_k)$ for every $k \leq q$. The group $P$ acts on $Y$, and every $P$-orbit in $Y$ is finite because all the members of $\F$ are finite. Moreover Lemma \ref{lem-growth-m-control} ensures $\vol_{P, Y}(n) \preccurlyeq f(n)$. Hence to conclude the proof it suffices to see that the $P$-action on $Y$ has orbits of controlled diameter.
	
	Let $g$ be a non-trivial element of $P$. We repeat verbatim the argument in the proof of Lemma  \ref{lem-faithful-gluing}. We find a decomposition $g = \gamma_q \cdots \gamma_1$ and a $c$-admissible $(X_k,e_k,s_k)_{k \leq q}$ such that $\gamma_k(e_k) = s_k$ for every $k \leq q$ (so that $g(e_1) = s_q$). The assumption that the $G_i$-action on $Y_i$ has orbits of controlled diameter allows to ensure that $\mathrm{diam}(X_k) \leq C d(e_k,s_k)$ for every $k \leq q$. Hence if we write $X = \mathcal{G}((X_k,e_k,s_k)_{k \leq q})$, then $X \in Y$ and  \[ \mathrm{diam}(X) \leq \sum_{k=1}^q \mathrm{diam}(X_k) \leq C  \sum_{k=1}^qd(e_k,s_k) = C d(e_1,s_q). \] \qedhere
\end{proof}

\subsection{The Grigorchuk group} \label{subsec-Grigorchuk-group}


In this section we prove that the Grigorchuk group belongs to $\Class_{\mathrm{CD}}(\mathrm{lin})$. 
We denote by $T=\{0, 1\}^*$ the rooted tree of finite binary words, where each word $w$ is connected by an edge to $wx, x\in \{0, 1\}$. Recall that the Grigorchuk group $G$ is the subgroup of the automorphism group $\aut(T)$ generated by the set of automorphisms $S=\{a, b, c, d\}$ given by the recursive rules
\[\begin{array}{lr}
a(0w)=1w, & a(1w)=0w;\\
b(0w)=0a(w), & b(1w)=0c(w);\\
c(0w)=0a(w), & c(1w)=1d(w);\\
d(0w)=0w, &d(1w)=1b(w).\\
\end{array}\]
We denote by $\Gamma_n$ the Schreier graph of the action of $G$ on the level $\{0, 1\}^n$ of the tree. We recall basic properties of the structure of these graphs, see  \cite{Bar-Gri-Hecke} for a more detailed description. Each $\Gamma_n$ is isometric to an interval of length $2^n-1$ in $\Z$ (with loops and multiple edges, which will not be important for us). Along these segments, binary words are ordered following the \emph{Gray code} ordering, namely each $w\in \{0, 1\}^n$ is connected by an edge in $\Gamma_n$ to the word obtained by changing its first digit, and to the word obtained by changing the digit that follows the first appearance of $0$ in $w$ (if the latter exists).  Thus all words have exactly two neighbours, with the exception of the words $11\cdots11$ and $11\cdots 10$, which lie at the extreme points of $\Gamma_n$ (we shall picture this with $11\cdots 11$ as the leftmost point and $11\cdots 10$ as the rightmost point). Let us denote by $d_n$ the associated distance on $\{0, 1\}^n$.
The map $p_n\colon \{0, 1\}^n\to \{0, 1\}^{n-1}$ that erases the last digit induces a covering map $p_n\colon \Gamma_n\to \Gamma_{n-1}$, which corresponds to folding $\Gamma_n$ around its middle edge.

\begin{prop} \label{p-Grigorchuk-linear-displacement}
	Let $G$ be the Grigorchuk group and retain the above notation. Then for every non-trivial element $g\in G$, there exists $n\ge 1$ and $w\in \{0, 1\}^n$ such that $d_n(gw, w)\ge \frac{1}{8} \diam(\Gamma_n)$. In particular $G$ belongs to $\Class_{\mathrm{CD}}(\mathrm{lin})$. 
\end{prop}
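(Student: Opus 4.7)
The plan is to carry out a case analysis on the smallest level $N$ at which $g$ acts non-trivially, using an explicit description of vertex positions in $\Gamma_n$. As a preliminary, one establishes the positional recursion: for $w \in \{0,1\}^{n-1}$, the two lifts $w \cdot 0$ and $w \cdot 1$ of $w$ in $\Gamma_n$, starting from a vertex at position $\phi_{n-1}(w) = j$, occupy positions $2^n - 1 - j$ and $j$ respectively in $\Gamma_n$. This can be proved by induction using the fact that $p_n\colon \Gamma_n \to \Gamma_{n-1}$ folds $\Gamma_n$ around its middle edge, together with the observation that $11\cdots 1$ of length $n$ sits at position $0$. A consequence is that the two children of a vertex $v$ at position $j$ in $\Gamma_m$ lie at positions $\{j, 2^{m+1} - 1 - j\}$ of $\Gamma_{m+1}$, at mutual distance $|2^{m+1} - 1 - 2j|$.

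For $n \leq 3$, the bound $(2^n - 1)/8$ is strictly less than $1$, so any non-trivial displacement at such a level automatically satisfies the conclusion; hence one may assume $g$ fixes levels $1, 2, 3$ pointwise. Let $N \geq 4$ be the smallest level at which $g$ acts non-trivially. Since $g$ fixes level $N-1$, it swaps the children of some $v \in \{0,1\}^{N-1}$. The displacement in $\Gamma_N$ between $v0$ and $v1$ equals $|2^N - 1 - 2 \phi_{N-1}(v)|$, which is at least $(2^N - 1)/8$ precisely when $\phi_{N-1}(v) \leq \tfrac{7}{16}(2^N - 1)$; if any swap vertex of $g$ at level $N - 1$ satisfies this inequality, the proof concludes at $n = N$.

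Otherwise, every swap vertex of $g$ lies in the extreme right of $\Gamma_{N-1}$, a small collection of vertices determined by the positional recursion, with $v^* = 1^{N-2}0$ attaining the maximal position $2^{N-1} - 1$. For such $v$, the section $g|_v$ is a non-trivial element of $G$ by self-similarity, acting on level~$1$ of its sub-tree as the root swap. One then examines level $N + 1$: the grandchildren of $v$ spread over a wider range of $\Gamma_{N+1}$ (for $v = v^*$ they occupy positions $\{2^{N-1}-1,\, 2^{N-1},\, 3\cdot 2^{N-1}-1,\, 3\cdot 2^{N-1}\}$), and the permutation $g$ induces on them, which is determined by the level-$2$ action of $g|_v$, produces a displacement of order $2^N$ whenever this level-$2$ action does not coincide with that of the generator $a$. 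The main obstacle is closing the iteration: if $g|_v$ mimics $a$ at level $2$ for every bad vertex $v$, one must recurse on the sections $g|_v$ via self-similarity, using structural features of the Grigorchuk group---its regular branch structure and the constrained form of its rigid stabilizers---to conclude that no non-trivial element of $G$ can have portraits that indefinitely agree with those of $a$ along a chain of sections, thereby forcing the iteration to terminate at some finite level with the required displacement.
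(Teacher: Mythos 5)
Your set-up (reduce to $n\le 3$ trivially, locate the first active level, and use the folding recursion to see that a swap at a vertex of position $j$ produces displacement $2^N-1-2j$ between its two children, which is large unless the vertex sits in roughly the rightmost eighth of the level) matches the geometric half of the paper's argument. But the proof is not complete: the entire difficulty is concentrated in your last step, which you yourself flag as "the main obstacle" and leave as a gesture towards "regular branch structure and rigid stabilizers". As written, the iteration is not even well-posed: the "bad" set is not a small collection of vertices but about $2^{N-4}$ of them (an eighth of the level), each bad vertex needs its own analysis at level $N+1$, and when you pass to the section $g|_v$ the distances among descendants of $v$ inside $\Gamma_n$ are not governed by the Schreier graph of the section acting on its own subtree, so "recursing on $g|_v$" does not reduce the problem to a smaller instance of the same statement. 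Finally, the asserted termination ("no non-trivial element can have portraits that indefinitely agree with those of $a$ along a chain of sections") is unproved and is essentially the crux; without it the argument does not conclude.

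The paper avoids any recursion by importing a rigidity fact about portraits: by Theorem 1 of Arzhantseva and \v{S}uni\'c, the restriction of the portrait of an element of $G$ to a depth-3 subtree whose active vertices are all leaves cannot have a single active leaf. Taking $m$ to be the first active level (so $m\ge 3$ after the trivial reduction), this produces \emph{two} distinct active vertices $w,v$ at level $m$ with the same projection to level $m-3$. Since the rightmost interval $I\subset\Gamma_m$ of length $2^{m-3}$ (exactly your bad region) is mapped injectively by the projection $\Gamma_m\to\Gamma_{m-3}$, at most one of $w,v$ can lie in $I$; the other one lies in the good region, and the fold $p_{m+1}\colon\Gamma_{m+1}\to\Gamma_m$ then forces displacement at least $\tfrac18\diam(\Gamma_{m+1})$ at level $n=m+1$. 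If you want to salvage your approach, you should replace the open-ended iteration by such a statement guaranteeing a second active vertex close (in the tree) to the first one at the minimal active level; that is precisely the input your proposal is missing.
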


Recall that \textbf{portrait} of an element $g\in G$ is the collection permutations $(\sigma_w)_{w\in T}$, where each $\sigma_w\in \Sym(2)$ specifies the action of $g$ on the two children of $w$. In formulas, we have $g(wx)=g(w)\sigma_w(x)$ for $x\in \{0, 1\}$. We say that $w$ is an \textbf{active vertex} of $g$ if $\sigma_w$ is non-trivial. We will rely on a result in \cite{Arz-Sun}, where Arzhantseva and \v{S}uni\'c provide an explicit list of all the restrictions of portraits of elements of Grigorchuk groups to finite subtrees of depth 3 (and show that this characterises the closure of $G$ in $\aut(T)$). 

\begin{proof}[Proof of Proposition \ref{p-Grigorchuk-linear-displacement}]
	Let $g\in G$ be non-trivial.  Let $m$ be the smallest level containing active vertices. We can suppose that $G$ fixes $\{0, 1\}^n$ for all $n\le 3$, or the conclusion holds true trivially for $g$, since $\diam(\Gamma_n)\le 7$ for $n\le 3$ and any point which is not fixed by $g$ is moved at distance at least 1. Hence $m \ge 3$. Choose $w\in \{0, 1\}^m$ active, let $w_0$ be the projection of $w$ at level $m-3$ (i.e., the word obtained by removing the last three digits from $w$), and $T_0$ be the finite rooted at $w_0$ and containing all its descendants up to 3 levels. Hence $w$ is a leaf of $T_0$, and all active vertices for $g$ on  $T_0$ must be leaves, by minimality of $m$. It follows from Theorem 1 in \cite{Arz-Sun} that at least another leaf of $T_0$, distinct from $w$, must be active. It follows that we can find two distinct active vertices $w, v\in \{0, 1\}^m$ having the same projection at level $m-3$. Consider now the action on level $m+1$.  Since $g$ fixes level $m$, it must preserves all fibers of the covering map $p_{m+1}\colon \Gamma_{m+1}\to \Gamma_m$, which consist each of two points, and it acts non-trivially on each of the two fibers $p_{m+1}^{-1}(w), p_{m+1}^{-1}(v)$. Let $I\subset \Gamma_m$ be the interval ending at the right-most points of  length $2^{m-3}\ge\frac{1}{8}\diam(\Gamma_m)$. For any vertex $u\notin I$, the two points in $p_{m+1}^{-1}(u)$ are at distance at least $2|I|\ge\frac{1}{8}\diam(\Gamma_{m+1})$ in $\Gamma_{m+1}$. On the other hand the projection map from $ \Gamma_m\to \Gamma_{m-3}$ is injective on $I$, and since $w, v$ have the same image, they cannot both belong to $I$. Hence the desired conclusion follows, for $n=m+1$. \qedhere
\end{proof}

\subsection{The proof of Proposition \ref{prop-intro-grig}} Note that Proposition \ref{prop-intro-grig} from the introduction follows from Proposition \ref{prop-large-displ-stable}, together with the fact that all groups in the list belong to $\Class_{\mathrm{CD}}(\mathrm{lin})$, which has been established in Example \ref{e-LD}, Proposition \ref{prop-wreath-LD} and Proposition \ref{p-Grigorchuk-linear-displacement}.



\section{Schreier growth gaps for free products} \label{sec-gap-free-prod}

\subsection{A criterion} Let $G$ be a finitely generated group endowed with a finite symmetric generating set $S$. If $X$ is a $G$-set, we endow $X$ with  its Schreier graph structure. The associated simplicial distance is denoted $d_{G, X}$ (we adopt the convention that $d_{G, X}(x, y)=+\infty$ for points in distinct $G$-orbits). We denote by $B_{G,X}(x, n)$ the corresponding balls. The \textbf{support} of an element $g\in G$ in $X$ is the set $\supp_X(g)=\{x\in X\colon g x \neq x\}$. Given $R>0$, we shall say that two subsets $A, B\subset X$ are \textbf{$R$-separated} if $d_{G,X}(a, b)\ge R$ for every $a\in A$ and $b\in B$. 
Finally the \textbf{$R$-coarse connected components} of a subset $A\subset X$ are the equivalence classes of the equivalence relation on $A$ generated by the pairs $(a_1, a_2)$ such that $d_{G,X}(a_1, a_2)\le R$.

\begin{defin}
\label{def:C}
Let $G$ be a finitely generated group endowed with a finite symmetric generating set $S$. Let $\alpha, \beta \colon \mathbb{N} \to \mathbb{N}$ be functions such that $\alpha(n) \preccurlyeq \beta(n)$. We say that $G$ satisfies the\textbf{ sparse support condition at scale} $(\alpha, \beta)$,  if  for every faithful $G$-set $X$ such that $\vol_{G, X}(n)\not \succcurlyeq \beta(n)$, there exist constants $C, D>0$ such that for every $R>0$, there exists a non-trivial element $g\in G$ satisfying the following:
\begin{itemize}
\item[(C1)] We have $d_{G, X}(x, gx)\le D$ for every $x\in X$. 
\item[(C2)] Every $D$-coarse connected component of $\supp_X(g)$ has diameter at most $C$. 
\item[(C3)] Any two distinct $D$-coarse connected components of $\supp_X(g)$ are $R$-separated.
\item[(C4)] For every $x\in \supp_X(g)$, we have $|B_{G,X}(x, R)|\ge \frac{1}{C} \alpha(\frac{1}{C}R)$. 
\end{itemize}
\end{defin}
It is routine to check that this condition does not depend on the choice of a finite generating set $S$ for $G$ (using that a change of the generating set induces a bi-Lipschitz equivalence of the Schreier graphs, with constant depending on the generating sets only). Note that condition (C4) implies in particular that $G$ satisfies a Schreier growth gap $\alpha(n)$. The sparse support condition implies that this gap can be improved for any non-trivial free product with $G$, as follows. 
\begin{prop}\label{p-sparse-support}
Suppose that $G$ satisfies the sparse support condition at scale $(\alpha, \beta)$. Then for every non-trivial finitely generated group $H$, the group $G\ast H$ has a Schreier growth gap $\min(n \alpha(n), \beta(n))$.

\end{prop}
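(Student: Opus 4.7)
The plan is as follows. Let $Y$ be a faithful $(G\ast H)$-set, fix a nontrivial $h \in H$ of word length $d_0 := |h|_H$, and take $S = S_G \cup S_H$ as generating set of $G \ast H$. Since $G$ embeds in $G\ast H$, which acts faithfully on $Y$, the restricted $G$-action on $Y$ is faithful, and the inclusion of edge sets gives $\vol_{G, Y}(n) \leq \vol_{G\ast H, Y}(n)$. If $\vol_{G\ast H, Y}(n) \succcurlyeq \beta(n)$ there is nothing to prove, so assume the contrary. Then $\vol_{G, Y}(n) \not\succcurlyeq \beta(n)$, and applying the sparse support condition to the faithful restricted $G$-action on $Y$ furnishes constants $C, D > 0$ and, for every $R>0$, a nontrivial $g_R \in G$ satisfying (C1)--(C4).

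The core construction is to produce, for $R$ large, a base point $x_0 \in \supp_Y(g_R)$ satisfying the separation property
\[ h^j \cdot B_{G, Y}(x_0, R) \cap B_{G, Y}(x_0, R) = \emptyset \quad \text{for every } 1 \leq j \leq K, \tag{$\star$} \]
where $K := \lfloor R/d_0 \rfloor$. Granting $(\star)$, the translates $h^a B_{G, Y}(x_0, R)$ for $0 \leq a \leq K$ are pairwise disjoint subsets of $B_{G\ast H, Y}(x_0, R + Kd_0)$: indeed, $h^a B \cap h^{a'} B = h^{a'}(h^{a-a'}B \cap B)$. Since $x_0 \in \supp(g_R)$, (C4) gives $|B_{G, Y}(x_0, R)| \geq \alpha(R/C)/C$, so
\[ |B_{G\ast H, Y}(x_0, 2R)| \geq (K+1) \cdot \alpha(R/C)/C \succcurlyeq R\,\alpha(R). \]
Letting $R \to \infty$ yields $\vol_{G\ast H, Y}(n) \succcurlyeq n\,\alpha(n)$, completing the proof.

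The main obstacle is securing $(\star)$. If it fails for a chosen $x_0 \in \supp(g_R)$, then there exist $g_1, g_2 \in B_G(R)$ and $1 \leq j \leq K$ with $g_2 x_0 = h^j g_1 x_0$, producing a nontrivial stabilizer element $g_2^{-1} h^j g_1 \in G\ast H$ of word length $O(R)$ at $x_0$. I plan to rule out that this happens for \emph{every} candidate $x_0$ via a pigeonhole argument on $j$ combined with the $D$-coarse decomposition of $\supp(g_R)$: by (C3) its distinct components are $R$-separated in $\Gamma(G, Y)$ and by (C2) each has diameter $\leq C$. If many base points give short $h^j$-shortcuts for a common value of $j$, these shortcuts either align the distinct $R$-separated components (a rigidity that can itself be turned into a ball of size $\succcurlyeq R\alpha(R)$ in $\Gamma(G\ast H, Y)$ by a translate-disjointness argument one level deeper), or produce so many independent $G$-relations that one contradicts $\vol_{G, Y}(n) \not\succcurlyeq \beta(n)$. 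The $G \ast H$-conjugate $h^b g_R h^{-b}$, which still has bounded displacement in $Y$, provides additional flexibility to reposition the base point when needed.

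I expect the bulk of the technical work to lie in this case analysis, and in particular in reconciling the constants coming from applying the sparse support condition at various scales $R$, so that the parameters $K$, $R$, $C$, $D$ and the generating set of $H$ all interact consistently to produce a single base point $x_0$ realizing~$(\star)$.
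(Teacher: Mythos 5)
There is a genuine gap, and it sits exactly at the step you yourself flag as the main obstacle: property $(\star)$ cannot be secured, and in some cases it is outright impossible. The statement allows $H$ to be any non-trivial finitely generated group, in particular finite (e.g.\ $H=\Z/2\Z$); if $h$ has finite order then $h^{j}=1$ for some $1\le j\le K$ once $K\ge \mathrm{ord}(h)$, so $(\star)$ literally asks for $B_{G,Y}(x_0,R)\cap B_{G,Y}(x_0,R)=\emptyset$ and fails for every base point. Even when $h$ has infinite order in $H$, nothing ties the dynamics of $h$ to $\supp_Y(g_R)$: the element $h$ is only required to move \emph{some} point of $Y$, and it may fix pointwise the entire $R$-neighbourhood of $\supp_Y(g_R)$, in which case no $x_0\in\supp_Y(g_R)$ (which you need in order to invoke (C4)) can satisfy $(\star)$. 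Your fallback plan --- pigeonholing on $j$, turning ``$h^j$-shortcuts'' into either a rigidity statement or ``many independent $G$-relations'' contradicting $\vol_{G,Y}(n)\not\succcurlyeq\beta(n)$ --- is not an argument yet, and in the degenerate scenario just described the relations $g_2x_0=h^jg_1x_0$ carry no information about the $G$-action alone (one can have $g_1=g_2$ and $h^j$ acting trivially near $x_0$), so it is unclear it can be made to work at all.

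The paper's proof resolves precisely this difficulty by never using a power of $h$ alone: it takes the commutator $h'=[g_R,t]$ with $t\in H$ non-trivial. This element always has infinite order in $G\ast H$ (it is a cyclically reduced word of length $4$ in the free product, regardless of torsion in $G$ or $H$), has displacement bounded by $2D+2$ thanks to (C1), and has support contained in the $1$-neighbourhood of $\supp_X(g_R)$. Faithfulness then yields a point $x_0$ whose $\langle h'\rangle$-orbit contains $R+1$ distinct points, all inside a ball of radius $O(R)$ and all at distance $\le 1$ from $\supp_X(g_R)$. Since each $D$-coarse component of $\supp_X(g_R)$ has diameter $\le C$, hence uniformly bounded cardinality, a pigeonhole argument extracts $\gtrsim R/C_1$ orbit points lying in pairwise distinct components; by (C3) these are $R$-separated, so the $G$-balls of radius $R/2$ around them are disjoint, and (C4) bounds each from below by $\sim\alpha(R)$, giving the $R\,\alpha(R)$ lower bound inside a single $(G\ast H)$-ball. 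If you want to repair your write-up, the missing idea is exactly this replacement of $h^j$ by powers of the commutator $[g_R,t]$, which simultaneously guarantees infinite order, bounded displacement, and support glued to $\supp_Y(g_R)$; with that substitution your disjointness/counting computation can be replaced by the orbit-plus-pigeonhole count above.
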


\begin{proof}
Set $L=G\ast H$. We fix a finite generating set of $L$ of the form $S\cup T$, where $S$ is a generating set of $G$ and $T$ is a generating set of $H$. Let $t\in T$ be a non-trivial generator of $H$. Let $X$ be a faithful $L$-set. We shall consider on $X$ the Schreier graph distance by $d_{L, X}$, as well as the distance $d_{G, X}$ induced by restricting the action to $G$. Suppose that $\vol_{G, X}(n)\not \succcurlyeq \beta(n)$ (else, the desired conclusion is true trivially). Let $C, D$ be as in Definition \ref{def:C}, fix $R>0$ (which we may assume is even), and let $g\in G$ be the corresponding element. Consider the commutator $h=[g, t]$. Note that $\supp_X(h)$ is contained in the 1-neighbourhood of $\supp_X(g)$ (with respect to the distance $d_{L, X}$). 

Since $h$ has infinite order and the action is faithful,  we can find $x_0\in X$ such that the points $x_n=h^n(x_0)$ are pairwise distinct for $n=0,\ldots, R$. Note that $d_{L, X}(x_n, x_{n+1})\le 2D+2$, by condition (C1). For each point $x_n$ we  choose $y_n \in \supp_X(g)$ such that $d_{L, X}(x_n, y_n)\le 1$. Then all points $y_0,\ldots, y_R$ belong to the ball $B_{L, X}(x_0, D_1R)$, with $D_1=2D+3$. Since all $D$-coarse connected components of $\supp_X(g)$ with respect the distance $d_{G, X}$ have diameter bounded by $C$, their 1-neighbourhood in the distance $d_{L, X}$ has cardinality uniformly bounded by some constant $C_1>0$  (not depending on $R$).

Now from the fact that the points $x_n$ are all distinct, we deduce that at most  $C_1$ points  $y_0, \ldots, y_R$ belong to the same $D$-coarse connected component. It follows that from $\{y_0,\ldots, y_n\}$ we can extract a collection of points $z_{1}, \ldots, z_{\lfloor R/ C_1 \rfloor}$ belonging to distinct $D$-coarse connected components. Then the balls $B_{G, X}(z_i, R/2)$ are pairwise disjoint by (C3), moreover they are contained in $B_{L, X}(x_0, D_2R)$, with $D_2=D_1+1/2$. From (C4) we deduce that 
\[|B_{L, X}(x_0, D_2R)|\ge \sum_{i=1}^{\lfloor R/C_1\rfloor} |B_{G, X}(z_i, R/2)|\ge \frac{1}{C_2} R\alpha (\frac{1}{C_2} R),\]
 for some constant $C_2>0$, which finishes the proof.  \qedhere
\end{proof}

\begin{remark} \label{r-sparse-support}
We point out that in the proof of Proposition \ref{p-sparse-support}, the only property of the free product $L=G\ast H$ that has been used is the following: $L$ is an overgroup of $G$ containing an element $t\in L$ such that for every non-trivial $g\in G$, the commutator $[t, g]$ has infinite order.  
\end{remark}

\subsection{Confined subgroups}

We recall the following definition.

\begin{defin}
Let $G$ be a group. A subgroup $H$ of $G$ is \textbf{confined} if there exists a finite subset $P \subset G \setminus \{1\}$ such that $H^g \cap P \neq \emptyset$ for all $g \in G$.
\end{defin}

Equivalently, a subgroup $H$ is confined if the closure of the set of conjugates of $H$  in the space $\sub(G)$ of subgroups of $G$, does not contain the trivial subgroup. The following simple lemma explains the usefulness of the notion of confined subgroups for the study of growth of actions. See \cite[Lemma 1.8]{LB-MB-solv} for a proof. If $X$ is a $G$-set and $x \in X$, we denote by $G_x$ the stabilizer of $x$ in $G$.


\begin{lem}\label{lem:SmallGrowthConfined}
Let $G$ be a finitely generated group, and $X$ a $G$-set. Let \[ \mathcal{S}(X) = \overline{ \left\lbrace G_x  \, : \, x \in X \right\rbrace } \subseteq \sub(G). \] Then for every $H \in \mathcal{S}(X)$, we have $\vol_{G, G/H}(n) \preccurlyeq \vol_{G, X}(n)$. In particular if some $G_x$ is not confined then $\vol_{G,X}(n) \simeq \vol_G(n)$. 
\end{lem}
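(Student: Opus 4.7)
The plan is to use Chabauty convergence separately at each radius $n$, approximating the coset space $G/H$ by the orbit of a well-chosen stabilizer. Concretely, I would pick a sequence $(x_i) \subset X$ with $G_{x_i} \to H$ in $\sub(G)$, and for each $n$ choose an index $i = i(n)$ such that $G_{x_i}$ agrees with $H$ on the finite set $B_G(2n)$; this is possible since $B_G(2n)$ is finite (as $G$ is finitely generated) and since Chabauty convergence of subgroups amounts to pointwise convergence of their indicator functions.

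The core observation is the elementary counting identity that, for any subgroup $K \leq G$, the size of $B_G(n) \cdot K \subseteq G/K$ equals the number of equivalence classes of $B_G(n)$ under the relation $g_1 \sim_K g_2 \iff g_1^{-1} g_2 \in K$. Because ratios of elements of $B_G(n)$ lie in $B_G(2n)$, this count depends only on $K \cap B_G(2n)$. The analogous identity for the orbit of a point $x \in X$ applies with $K = G_x$, using $g_1 x = g_2 x \iff g_1^{-1} g_2 \in G_x$. Combining both with the choice of $i = i(n)$ above yields
\[ \vol_{G, G/H}(n) = |B_G(n) \cdot H| = |B_G(n) \cdot x_{i(n)}| \leq \vol_{G, X}(n), \]
where the first equality uses transitivity of $G$ on $G/H$. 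This proves the first claim, in fact with sharp non-asymptotic constants.

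For the ``in particular'' statement, I would unpack the definition: $G_x$ fails to be confined precisely when the trivial subgroup $\{1\}$ lies in the Chabauty closure of the conjugates of $G_x$ in $\sub(G)$. Since conjugates of $G_x$ are themselves stabilizers of points in the $G$-orbit of $x$, this places $\{1\}$ in $\mathcal{S}(X)$. Applying the first part with $H = \{1\}$ and noting that $G/\{1\} = G$ acts on itself by left translation gives $\vol_G(n) \preccurlyeq \vol_{G, X}(n)$; the reverse inequality is trivial. I do not foresee a real obstacle; the only delicate point is the quantifier arrangement, namely that the approximating index $i = i(n)$ depends on $n$, but since $\vol_{G, X}(n)$ is a supremum over base points in $X$ this causes no issue.
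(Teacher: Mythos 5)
Your core mechanism is sound and is essentially the standard argument (the paper itself gives no proof here, deferring to a citation): since $G$ is countable, $\sub(G)\subseteq\{0,1\}^G$ is a metrizable compact space, so for $H\in\mathcal{S}(X)$ and each $n$ you can find $x\in X$ with $G_x\cap B_G(2n)=H\cap B_G(2n)$, and the counting identity correctly shows that the size of the $n$-ball around the base coset in $G/H$ depends only on $H\cap B_G(2n)$, hence equals $|B_G(n)\cdot x|\le \vol_{G,X}(n)$. The deduction of the ``in particular'' statement is also correct: non-confinement puts the trivial subgroup in $\mathcal{S}(X)$, and $G/\{1\}$ is the Cayley graph.

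However, one step as written is false: the equality $\vol_{G,G/H}(n)=|B_G(n)\cdot H|$ does not follow from transitivity. The paper defines $\vol_{G,G/H}(n)$ as a maximum over all base points, and Schreier graphs of transitive actions are in general not vertex-transitive; for instance with $G=F_2=\langle a,b\rangle$ and $H=\langle a\rangle$, the $1$-ball around the coset $H$ has $3$ points while the $1$-ball around $bH$ has $5$. Your argument therefore only bounds the ball centred at $eH$. The repair is short: the stabilizer of $gH$ is $gHg^{-1}$, and $\mathcal{S}(X)$ is conjugation-invariant because $G_{gx}=gG_xg^{-1}$ and conjugation by a fixed $g$ is a homeomorphism of $\sub(G)$; so $gHg^{-1}\in\mathcal{S}(X)$, and applying your base-point estimate to $G/(gHg^{-1})$ (whose base coset corresponds to $gH$ under the equivariant bijection $kH\mapsto kg^{-1}\,(gHg^{-1})$) yields $|B_G(n)\cdot gH|\le \vol_{G,X}(n)$ for every $g$ and $n$. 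With that insertion the proof is complete, and indeed gives the inequality with constant $1$.
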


Here $ \vol_G(n)$ represents the standard word growth given by the size of an $n$-ball in the Cayley graph of $G$ with respect some finite generating subset.




We will invoke the following, which is a particular case of Proposition 1.6 in \cite{LB-MB-solv}. 

\begin{lem}
	\label{lem:FiniteIndex}
	Let $G$ be a finitely-generated group, and let $H \leq K$ be subgroups of $G$. Then $\vol_{G, G/K} \le \vol_{G, G/H}$, and if in addition $H$ has finite index in $K$, then $\vol_{G, G/H} \simeq \vol_{G, G/K}$.
\end{lem}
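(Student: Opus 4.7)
The plan is to exploit the canonical $G$-equivariant surjection $\pi\colon G/H \to G/K$ defined by $gH\mapsto gK$, which makes sense precisely because $H\leq K$. Fixing a finite symmetric generating set $S$ of $G$, this map is a morphism of Schreier graphs: for every $s\in S$ one has $\pi(s\cdot gH)=sgK=s\cdot \pi(gH)$, so edges map to edges (possibly degenerating into loops when $sgK=gK$). Consequently $\pi$ is distance non-increasing between the Schreier graphs $\Gamma(G,G/H)$ and $\Gamma(G,G/K)$, and it is surjective.

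For the first inequality, I would observe that $\pi$ maps the ball $B_G(n)\cdot gH$ in $\Gamma(G, G/H)$ \emph{onto} the ball $B_G(n)\cdot gK$ in $\Gamma(G, G/K)$, simply because $\pi(s\cdot gH)=s\cdot gK$ for every $s\in B_G(n)$, and conversely every element of $B_G(n)\cdot gK$ is of this form. Taking cardinalities yields $|B_G(n)\cdot gK|\leq |B_G(n)\cdot gH|$, and since the cosets $gK$ are exactly the images $\pi(gH)$, passing to the maximum over $gH\in G/H$ gives $\vol_{G, G/K}(n)\leq \vol_{G, G/H}(n)$.

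For the second assertion, assume $d:=[K:H]<\infty$. The fibers of $\pi\colon G/H\to G/K$ are in bijection with $K/H$ and hence all have cardinality exactly $d$. I would then argue that the restriction $\pi\colon B_G(n)\cdot gH \to B_G(n)\cdot gK$ has fibers of cardinality at most $d$, since any two elements of $B_G(n)\cdot gH$ lying over the same coset of $K$ must belong to a common $\pi$-fiber. This gives $|B_G(n)\cdot gH|\leq d\,|B_G(n)\cdot gK|$, hence $\vol_{G, G/H}(n)\leq d\cdot\vol_{G, G/K}(n)$, which together with the first part yields $\vol_{G, G/H}\simeq \vol_{G, G/K}$.

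There is no serious obstacle in this argument; it is essentially a matter of bookkeeping around the map $\pi$. The only point requiring care is to verify the fiber bound for the second part uniformly in the basepoint $gH$ and in $n$, which is immediate from the fact that the global fibers of $\pi$ already have size $d$, so a fortiori their intersection with any $n$-ball in $G/H$ does.
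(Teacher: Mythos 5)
Your argument is correct and complete: the $G$-equivariant surjection $\pi\colon G/H\to G/K$ maps $B_G(n)\cdot gH$ onto $B_G(n)\cdot gK$, giving the first inequality, and when $[K:H]=d<\infty$ the fibers of $\pi$ have size exactly $d$, giving $|B_G(n)\cdot gH|\le d\,|B_G(n)\cdot gK|$ and hence the equivalence. The paper does not prove this lemma itself but quotes it as a special case of Proposition 1.6 of \cite{LB-MB-solv}; your fiber-counting argument is the natural self-contained proof of exactly this special case, so there is nothing substantive to contrast.
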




\subsection{Houghton groups} \label{subsec-houghton}

If $\Omega$ is a set, $\Sym(\Omega)$ is the group of permutations of $\Omega$. We denote by $\Alt_{\mathrm{f}}(\Omega)$  the group of alternating finitely supported permutations of $\Omega$. For a subgroup $H \leq \Sym(\Omega)$, write $\Omega_{H,\mathrm{f}}$ for the union of finite orbits under $H$, and $\Omega_{H,\infty}$ for the union of infinite ones. We say that a partition is $H$-invariant if every element of $H$ sends a block of the partition to a (possibly different) block of the partition.

The following is a special case of \cite[Proposition 3.7]{LB-MB-ht}. 

\begin{prop}\label{p-confined-partially-finitary}
Let $\Omega$ be a countable set, and let $G \leq \Sym(\Omega)$ be any subgroup containing $\Alt_{\mathrm{f}}(\Omega)$.  Then a subgroup $H\le G$ is confined if and only if $\Omega_{H, \mathrm{f}}$ is finite and there exists an $H$-invariant partition $\Omega_{H,\infty} = \Omega_1 \cup \cdots \cup \Omega_k$ such that $H$ contains $\Alt_{\mathrm{f}}(\Omega_1) \times \cdots \times \Alt_{\mathrm{f}}(\Omega_k)$.
\end{prop}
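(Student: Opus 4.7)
My plan is to prove the two implications separately; the ``if'' direction is a short pigeonhole argument, while the ``only if'' direction splits into a finiteness step and a structural step based on commutator calculus inside $\Sym(\Omega)$. For the ``if'' direction, assume $H\supseteq \Alt_{\mathrm{f}}(\Omega_1)\times\cdots\times\Alt_{\mathrm{f}}(\Omega_k)$ and $\Omega_{H,\mathrm{f}}$ is finite. I would fix a finite $A\subset\Omega$ with $|A|\ge 2k+1+|\Omega_{H,\mathrm{f}}|$ and let $P$ be the (finite) set of all $3$-cycles supported on $3$-element subsets of $A$. For any $g\in G$ we have $gHg^{-1}\supseteq \Alt_{\mathrm{f}}(g\Omega_1)\times\cdots\times\Alt_{\mathrm{f}}(g\Omega_k)$, and the sets $g\Omega_1,\ldots,g\Omega_k$ partition $\Omega\setminus g(\Omega_{H,\mathrm{f}})$; since $|g(\Omega_{H,\mathrm{f}})|=|\Omega_{H,\mathrm{f}}|$, at least $2k+1$ points of $A$ lie in $\bigsqcup_i g\Omega_i$, so by pigeonhole some $g\Omega_i$ contains three of them and the corresponding $3$-cycle lies in $P\cap gHg^{-1}$, witnessing confinement.

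For the ``only if'' direction, assume $H$ is confined by a finite $P\subset G\setminus\{1\}$. The first step is to show that $\Omega_{H,\mathrm{f}}$ is finite. Every $h\in H$ preserves $\Omega_{H,\mathrm{f}}$ setwise with finite orbits, so an equation $g^{-1}pg=h$ forces $p$ to preserve $g(\Omega_{H,\mathrm{f}})$ setwise with finite orbits. If $\Omega_{H,\mathrm{f}}$ were infinite, then for any finite $F\subset G\setminus\{1\}$ one could use the high transitivity of $\Alt_{\mathrm{f}}(\Omega)\le G$ to find $g\in G$ placing $g(\Omega_{H,\mathrm{f}})$ so that no $p\in F$ preserves it with finite orbits, forcing $gHg^{-1}\cap F=\emptyset$ and contradicting confinement.

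The second step, with $\Omega_{H,\mathrm{f}}$ now finite, is to define the partition of $\Omega_{H,\infty}$ and prove the alternating containment. I would consider the equivalence relation $\sim$ on $\Omega_{H,\infty}$ generated by $\omega\sim\omega'$ whenever some non-trivial element of $H$ has both $\omega,\omega'$ in its support, and let $\Omega_1,\ldots,\Omega_k$ be its classes; these are automatically $H$-invariant. To prove $\Alt_{\mathrm{f}}(\Omega_i)\subseteq H$, I would use a commutator calculus inside $\Sym(\Omega)$: conjugates of elements of $P$ sitting inside $H$, combined with commutators against well-chosen permutations whose supports meet $\Omega_i$, produce $3$-cycles inside $H$ supported in $\Omega_i$, and these generate $\Alt_{\mathrm{f}}(\Omega_i)$. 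Finiteness of $k$ follows from a second Chabauty-style argument: if there were infinitely many classes, one could scramble them by $\Alt_{\mathrm{f}}(\Omega)$-conjugation to push conjugates of $H$ arbitrarily close to the trivial subgroup, again contradicting confinement.

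The main obstacle is the alternating-containment portion of this second step: one must track commutators carefully to ensure the elements produced remain inside $H$, not merely in the ambient $\Sym(\Omega)$. The confining set $P$ is used precisely to guarantee a supply of non-trivial elements in every conjugate of $H$ that can be combined with $\Alt_{\mathrm{f}}(\Omega)$-conjugation to feed the commutator iteration, and making this combinatorially precise (including the bound on $k$) is the technical crux. The first step of the ``only if'' direction is comparatively soft, relying only on the transitivity of $\Alt_{\mathrm{f}}(\Omega)$, and the ``if'' direction is purely combinatorial.
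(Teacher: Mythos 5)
Note first that the paper does not actually prove this proposition: it is invoked as a special case of Proposition 3.7 of \cite{LB-MB-ht}, so the only proof to compare with is the external one. Your ``if'' direction is correct and complete (the pigeonhole argument with the finite set of $3$-cycles supported in $A$ works). The ``only if'' direction, however, has two genuine gaps. First, the finiteness of $\Omega_{H,\mathrm{f}}$: the only constraint you extract from $p\in gHg^{-1}\cap F$ is that $p$ preserves $g(\Omega_{H,\mathrm{f}})$ setwise with finite orbits. If $p$ has finite order or finite support the ``finite orbits'' part is vacuous, so the constraint is mere setwise invariance; and if $\Omega_{H,\mathrm{f}}$ is all of $\Omega$ (all $H$-orbits finite) or cofinite, every $p$ preserves every $g(\Omega_{H,\mathrm{f}})$ and no choice of $g$ gives a contradiction. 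Worse, $G$ may be exactly $\Alt_{\mathrm{f}}(\Omega)$, in which case $g(\Omega_{H,\mathrm{f}})$ differs from $\Omega_{H,\mathrm{f}}$ in only finitely many points, so ``placing'' it by high transitivity is not available. The case of a subgroup with many (or only) finite orbits is precisely one of the delicate cases, and it needs an argument exploiting the block structure of those finite orbits, not just the invariance of their union.

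Second, the structural step: taking $\Omega_1,\ldots,\Omega_k$ to be the classes of the equivalence relation generated by co-occurrence in the support of a nontrivial element of $H$ does not give the partition of the statement. For example, let $\Omega=\Z\times\{1,2\}$, $\Omega_i=\Z\times\{i\}$, and $H=(\Alt_{\mathrm{f}}(\Omega_1)\times\Alt_{\mathrm{f}}(\Omega_2))\rtimes\langle\sigma\rangle$ where $\sigma$ swaps $(n,1)\leftrightarrow(n,2)$. This $H$ is confined (by your own ``if'' direction, since the two-block partition is $H$-invariant), but $\sigma$ has full support, so your relation has a single class $\Omega$, while $\Alt_{\mathrm{f}}(\Omega)\not\le H$: no $3$-cycle of $H$ meets both copies. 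So the blocks cannot be read off from supports ($H$ is allowed to permute them), and the commutator mechanism that is supposed to produce $3$-cycles of $H$ inside each block --- which you yourself identify as the crux --- is left as a plan, as is the finiteness of $k$. In short, the easy half is fine, but both halves of the ``only if'' direction as sketched would fail or are missing, and this is exactly the content carried by the cited Proposition 3.7 of \cite{LB-MB-ht}.
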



For $r\ge 2$, let $\Xi_r$ be the graph obtained by gluing $r$ infinite rays $\ell_1, \cdots, \ell_r$ at their origin. The Houghton group $H_r$ is the group of all permutations $g$ of the vertex set of $\Xi_r$, such that there exists finite subset $E, F\subset \Xi_r$ such that for every $i$,  $g|_{\ell_i\setminus E}$ is an isometry onto $\ell_i\setminus F$. 
The group $H_r$ is finitely generated for $r \ge 2$. Since the action of $H_r$ on $\Xi_r$ is by permutations of bounded displacement, it satisfies $\vol_{H_r, \Xi_r}(n)\simeq n$. 

In what follows, we fix $r\ge 2$. Let us denote $\Omega$ the vertex set of $\Xi_r$,  and by $A:=\Alt_{\mathrm{f}}(\Omega)$. Recall that $A$ is simple, and every non-trivial subgroup of $\Sym(\Omega)$ normalized by $A$ contains $A$. In particular $A$ is a normal subgroup of $H_r$. 


\begin{prop} \label{p-Houghton}
For $r \ge 2$, let $X$ be a faithful $H_r$-set such that $\vol_{G, X}(n) \not \succeq n^2$. Let $X=\sqcup_{i \in I} X_i$ be the decomposition of $X$ into  $H_r$-orbits, and let $J$ be the subset of $i\in I$ such that the $H_r$-action on $X_i$ is  faithful.
Then:
\begin{enumerate}
\item \label{i-A-trivial} for $i\notin J$, the $A$-action on $X_i$ is trivial;
\item \label{i-A-non-trivial} there exist finite  $H_r$-sets $(Y_i)_{i\in J}$ of bounded cardinality (with respect to $i$) such that for every $i\in J$, the $H_r$-set $X_i$ is isomorphic to the product $H_r$-set $\Omega\times Y_i$.

\end{enumerate}
In particular, if $Z=\bigcup_{i\notin J} X_i$ is the set of $A$-fixed points, there exists a finite index subgroup $K$ of $H_r$ such that every $K$-orbit in $X\setminus Z$ is isomorphic to $\Omega$ as a $K$-set.
\end{prop}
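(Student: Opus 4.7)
The plan is to combine the classification of confined subgroups of $H_r$ (Proposition \ref{p-confined-partially-finitary}) with Lemma \ref{lem:SmallGrowthConfined}, and then squeeze the resulting structure using the growth hypothesis. As a preliminary I would record that $\vol_{H_r}(n)$ is superquadratic (in fact exponential): conjugating a single transposition in $H_r$ by distinct powers of a translation along one ray yields arbitrarily many pairwise commuting transpositions of bounded word length, whose products give exponentially many distinct elements. Combined with Lemma \ref{lem:SmallGrowthConfined} and the hypothesis $\vol_{H_r,X}(n)\not\succcurlyeq n^2$, this forces every point stabilizer $H_x$, $x\in X$, to be a confined subgroup of $H_r$; Proposition \ref{p-confined-partially-finitary} then provides a finite set $F_x=\Omega_{H_x,\mathrm{f}}$, an $H_x$-invariant partition $\Omega\setminus F_x=\Omega_1^x\sqcup\cdots\sqcup\Omega_{k_x}^x$, and the inclusion $H_x\supseteq\prod_j\Alt_{\mathrm{f}}(\Omega_j^x)$. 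Part (1) is then immediate: for $i\notin J$ the kernel $N$ of the $H_r$-action on $X_i$ is a non-trivial normal subgroup of $H_r$, hence normalized by $A=\Alt_{\mathrm{f}}(\Omega)$, and by simplicity of $A$ together with the fact recalled just above Proposition \ref{p-confined-partially-finitary}, $A\leq N$.

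For part (2), fix $i\in J$ and $x\in X_i$. The inclusion $A\leq H_x$ is excluded, since by normality of $A$ it would force $A$ to fix every $gx\in X_i$, contradicting faithfulness of the $H_r$-action on $X_i$. The crucial intermediate claim is that the confined data of $H_x$ is as thin as possible: $F_x=\{\omega_x\}$ is a singleton, $k_x=1$, and consequently $A\cap H_x=\Alt_{\mathrm{f}}(\Omega\setminus\{\omega_x\})$. I would prove this by a growth comparison organized around the $H_r$-equivariant fibration $X_i\to Y_i:=H_r/(AH_x)\cong\Z^{r-1}/\Lambda_x$, where $\Lambda_x$ is the image of $H_x$ in $H_r/A\cong\Z^{r-1}$ and each fiber is an $A$-orbit. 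First, if $\Lambda_x$ has infinite index then $Y_i$ is an infinite $\Z^{r-1}$-orbit whose Schreier graph has polynomial growth of degree at least $1$; pulling back and combining with the at-least-linear contribution of the $A$-orbit in the fiber gives $\vol_{H_r,X_i}(n)\succcurlyeq n^2$, a contradiction. Second, if $\sup_{i\in J}|Y_i|=\infty$, one can find quotients $\Z^{r-1}/\Lambda_x$ with arbitrarily large minimal side length; at scales below that side length, the ball in $Y_i$ has size $\sim n^{r-1}$ while the ball in the fiber has size at least $n$, producing a ball of size $\succcurlyeq n^r\succcurlyeq n^2$ in $X$, again a contradiction. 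Third, if $k_x\geq 2$ or $|F_x|\geq 2$ then $A\cap H_x$ is strictly smaller than any single-point stabilizer in $A$, and the $A$-orbit of $x$ carries a non-trivial $A$-equivariant extra structure (an additional partition or marked pair of points), which forces at least quadratic growth in the $A$-direction of the Schreier graph of $H_r$; combined with the (non-trivial) $Y_i$-direction this once more gives $\succcurlyeq n^2$ growth. Hence $k_x=|F_x|=1$, and $A\cap H_x$ is the stabilizer of $\omega_x$ in $A$.

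With this in hand, the orbit $A\cdot x$ is $A$-equivariantly isomorphic to $\Omega$ via $g\cdot x\mapsto g\cdot\omega_x$. Choosing a section of $X_i\to Y_i$ and extending fiberwise using these identifications produces the desired $H_r$-equivariant bijection $X_i\cong\Omega\times Y_i$, with $|Y_i|$ uniformly bounded in $i$ by the previous paragraph. For the ``in particular'' statement, let $\Lambda=\bigcap_{i\in J}\Lambda_i\leq\Z^{r-1}$; since all $|Y_i|=[\Z^{r-1}:\Lambda_i]$ are bounded by some $N$, each $\Lambda_i$ contains $N!\,\Z^{r-1}$, hence so does $\Lambda$, and $\Lambda$ has finite index in $\Z^{r-1}$. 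Setting $K=\pi^{-1}(\Lambda)$, where $\pi\colon H_r\to\Z^{r-1}$ is the projection, gives a finite-index subgroup containing $A$ that acts trivially on every $Y_i$, and hence preserves every sheet $\Omega\times\{y\}$ inside $X_i$. Since $A\leq K$ already acts transitively on each $\Omega\times\{y\}$, each $K$-orbit in $X\setminus Z$ is exactly such a sheet and is isomorphic to $\Omega$ as a $K$-set.

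The main obstacle I anticipate is the quantitative growth bookkeeping in the three bullet items above: one must rigorously express how the ``$A$-direction'' and the ``$\Z^{r-1}$-direction'' interact multiplicatively in the Schreier graph of $H_r$ on $X_i$, and in particular that any additional degree of freedom in the confined data of $H_x$ (an extra partition block, an extra point of $F_x$, or a sub-lattice $\Lambda_x$ that is too deep) contributes a factor of $n$ which, multiplied with any other non-trivial direction, produces at least $n^2$ growth. The underlying mechanism is robust, but the Schreier-graph estimate, although intuitively clear, requires care because the $A$-orbit of $x$ is not yet known to be isomorphic to $\Omega$ and the generators of $H_r$ mix the two directions.
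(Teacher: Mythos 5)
Your outline follows the same skeleton as the paper's proof: exponential growth of $H_r$ plus Lemma \ref{lem:SmallGrowthConfined} makes every point stabilizer confined, Proposition \ref{p-confined-partially-finitary} gives the data $(\Omega_{H_x,\mathrm{f}},\Omega_1,\ldots,\Omega_k)$, part (1) follows from normality and simplicity of $A$, and the endgame (unique $A$-equivariant identification of each $A$-orbit with $\Omega$, the product map to $\Omega\times Y_i$, bounded $|Y_i|$, and a finite-index $K$ acting trivially on all $Y_i$) is essentially the paper's. However, the entire content of the proposition lies in the quantitative steps you defer to the final caveat, and as stated they contain genuine gaps. First, your two bullets that rule out an infinite or unbounded base invoke an ``at-least-linear contribution of the $A$-orbit in the fiber'' at linear word-length cost \emph{before} the fiber has been identified with $\Omega$. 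This is not automatic: $A$ is not finitely generated, the relevant metric on the fiber is the restricted $H_r$-Schreier metric, and nothing you say excludes that only sublinearly many fiber points lie in a ball of radius $n$. One must exhibit explicit elements of $A$ of word length $O(n)$ that provably move the base point to $n$ distinct points of its fiber; the paper does this with the transposition trains $\gamma_{i,n}=t_i^n\sigma_i(t_i^{-1}\sigma_i)^n$ and $\delta_n=\gamma_{1,n}\gamma_{2,n}\in A$, and it only uses the fiber estimate \emph{after} proving the fiber is $\Omega$, where $\delta_n$ visibly displaces a marked point by $n$. Second, your third bullet — that an extra fixed point or an extra invariant infinite block ``forces at least quadratic growth'' — is precisely the crux and is asserted without a mechanism. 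The paper proves it by producing two independent linear directions along two distinct rays (the elements $\gamma_{1,m_1}\gamma_{2,m_2}$ applied to a pair of fixed points), and in the two-block case this needs an additional idea: one first conjugates, using high transitivity, to a configuration where the two blocks interleave along the rays; without that, the orbit of the partition could a priori sit in a cheaply reachable position and your quadratic bound fails to follow.

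Two side claims are also incorrect and should be repaired even in an expanded write-up. Balls of radius $n$ in $Y_i$ need not have size $\simeq n^{r-1}$ below the ``minimal side length'' of $\Lambda_x$ (take $\Lambda_x=\Z\times N\Z$ inside $\Z^2$ for $r=3$); only the linear lower bound coming from a long path in $Y_i$ is available, and that is what the paper uses, combining it with $\delta_n\in A$ acting trivially on $Y_i$ to get $n^2$ points in a ball of radius $O(n)$. Moreover $H_r/A$ is not $\Z^{r-1}$: the kernel of the translation-part homomorphism is $\Sym_{\mathrm{f}}(\Omega)$, not $A=\Alt_{\mathrm{f}}(\Omega)$, so $H_r/A$ is an extension of $\Z^{r-1}$ by $\Z/2$ (harmless for the argument, but the identification $Y_i\cong\Z^{r-1}/\Lambda_x$ needs the corresponding adjustment). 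Finally, the conjugates $t^n\sigma t^{-n}$ do not have bounded word length, so your preliminary justification of superquadratic growth of $H_r$ needs rephrasing, although the conclusion itself is standard and is all the paper uses. In summary: right architecture, but the growth estimates that constitute the actual proof are missing or misstated, and at least the two-block case requires an idea (interleaving via high transitivity) that does not appear in your plan.
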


\begin{proof}

Set $G=H_r$. First note that item \eqref{i-A-trivial} follows from the fact that $A$ is contained in every non-trivial normal subgroup of $G$. We will show \eqref{i-A-non-trivial}, which is the content of the statement.
We fix a word metric $\lVert \cdot \rVert$ on $G$, associated to some symmetric generating set. Let $X$ be  a $G$-set as in the statement and $K$ be the stabiliser of a point $x_0\in X_i$, with $I\in J$. The group $G$ has exponential growth, so by Lemma~\ref{lem:SmallGrowthConfined}, $K$ must be confined. Thus, we can apply Proposition \ref{p-confined-partially-finitary}.

Let us first show that $|\Omega_{K, \mathrm{f}}|\le 1$. Suppose by contradiction that this is not the case. Let $K_0\le K$ be the pointwise stabilizer of the finite set $\Omega_{K, \mathrm{f}}$, which has finite index in $K$. Then $\vol_{G, G/K_0}(n)\simeq \vol_{G, X_i}(n)$ by Lemma~\ref{lem:FiniteIndex}.  Choose two distinct points $x_1, x_2\in \Omega_{K, \mathrm{f}}$ and let $K_1 \ge K_0$ be their pointwise stabiliser. Since $\vol_{G, X_i}(n) \succcurlyeq \vol_{G, G/K_1}(n)$, a contradiction will follow if we show that $\vol_{G, G/K_1}(n)\succcurlyeq n^2$. Since the action of $G$ on $\Omega$ is highly transitive (that is, transitive on $n$-tuples of distinct points), upon replacing $K_1$ by some $G$-conjugate we suppose that $x_i$ is the point at position 1 on $\ell_i$ for $i=1, 2$ (with the convention that the common origin is at position 0).

Now for $i=1, 2$, let  $\sigma_i$ be the transposition that swaps the first and second positions of $\ell_i$. Let also $t_i\in G$ be any element such that $t_i|_{\ell_i}$ shifts $\ell_i$ inside itself by 1. For example, these can be taken to be $t, t^{-1}$ where $t$ is the shift along a $\Z$-isomorphic ray coming from juxtaposing the rays $\ell_1, \ell_2$. For $i=1, 2$ and $n\ge 1$ consider the element
 \[\gamma_{i, n}=(t_i^n\sigma_i t_i^{-n})(t_i^{n-1} \sigma_i t^{-n+1}) \cdots \sigma_i=t_i^n\sigma_i (t_i^{-1}\sigma_i)^n.\]
Note that $\lVert\gamma_{i, n}\rVert \le Cn$, with $C=\max\{\lVert \sigma_i \rVert, \lVert t_i\rVert \;|\; i=1,2\}$.  On the other hand each $\gamma_{i, n}$ is a permutation with finite support contained in $\ell_i$, and $\gamma_{i, n}(x_i)$ is the point at position $n+1$ on $\ell_i$. Hence applying products of the form $\gamma_{1, m_1} \gamma_{2, m_2}$ to the pair $(x_1, x_2)$, with $0\le m_1, m_2\le n$ shows that $\vol_{G, G/K_1}(n)\succcurlyeq n^2$. This is a contradiction and proves the claim that $|\Omega_{K, \mathrm{f}}|\le 1$.

We now consider the partition $\Omega_{K, \infty}=\Omega_1\cup \cdots \cup\Omega_k$ and claim that $k=1$.  Suppose that $k\ge 2$. By a similar reasoning as above (passing first to a finite index subgroup of $K$, and then to an overgroup), we have $\vol_{G, X}(n)\succcurlyeq \vol_{G, G/K_1}(n)$, where $K_1$ is the subgroup of $G$ of elements that preserve both $\Omega_1$ and $\Omega_2$, so a contradiction will follow if we show that $\vol_{G, G/K_1}(n)\succcurlyeq n^2$. Fix $n$ large enough. Again by high transitivity  and using the fact that $\Omega_1, \Omega_2$ are both infinite, we can find $g\in G$ such that the point at position 1 on $\ell_1$ is in $g(\Omega_1)$, and all points on $\ell_1$ at positions between 2 and $n$ are in $g(\Omega_2)$, while the symmetric statement holds for $\ell_2$. Now applying to $g(\Omega_1)$ and $g(\Omega_2)$ the same the elements $h=\gamma_{1, m_1}\gamma_{2, m_2}$ defined above with $1\le m_i\le n$, we obtain $n^2$ different pairs $(hg(\Omega_1), hg(\Omega_2))$ with $\lVert h\lVert \le 2Cn$. This shows that the ball of radius $2Cn$ around $gK_1$ has cardinality at least $n^2$. Hence $k=1$. 

It follows that $K$ contains $\alt(\Omega_{K, \infty})$. This also shows that $\Omega_{K, \mathrm{f}}\neq \varnothing$, since otherwise $K$ would contain the normal subgroup $A$, contradicting that the action on $X_I$ is faithful. Hence $K\cap A$ contains a point stabiliser for the action of $A$ on $\Omega$, and hence is equal to it, as the latter is a maximal subgroup of $A$. Since $K$ was an arbitrary point stabiliser, we deduce that the action of $A$ on $X_i$ is conjugate on each of its orbits to the standard action of $A$ on $\Omega$. For each $A$-orbit $\mathcal{O}\subset X_i$ we denote by $j_\mathcal{O}\colon \mathcal{O}\to \Omega$ the unique $A$-equivariant bijection (its uniqueness follows from the fact that the only $A$-equivariant permutation of $\Omega$ is the identity). 

Let $Y_i=X_i/A$, the space of $A$-orbits, on which $G$ acts since $A$ is normal. For a point $x\in A$, we denote by $\mathcal{O}(x)$ its $A$-orbit. Consider the map $\iota\colon X_i \to \Omega \times Y_i$ given by $\iota(x)=(j_{\mathcal{O}(x)}(x), \mathcal{O}(x))$. This map is equivariant (this is obvious for the second component; for the first, it follows from the observation that the map $g^{-1}\circ j_{\mathcal{O}(gx)} \circ g$ is an $A$-equivariant bijection from $\mathcal{O}(x)$ to $\Omega$, and thus it is equal to $j_{\mathcal{O}(x)}$). The map $\iota$ is also clearly injective. To check that it is surjective it is enough to observe that the diagonal $G$-action on $\Omega \times Y_i$ is transitive: this is true because the $G$-action on $Y_i$ is transitive, and the group $A$ acts trivially on $Y_i$ and transitively on each fiber $\Omega\times \{y\}$.

To conclude we need to argue that  $Y_i$ must have uniformly bounded cardinality when $i\in J$ varies. Fix $n$ and suppose that the diameter of $Y_i$ is at least $n$. Then we can find a point $y\in Y_i$ and elements $s_1, \ldots, s_n$ in the generating set of $G$ such that, if we set $g_m=s_m\cdots s_1$, then the points $y, g_1y,  \ldots, g_ny$ are pairwise distinct (the point $y$ and  sequence $(s_i)$ can be found by looking at any path in the Schreier graph of $Y_i$ of length at least $n$). Next consider again the elements $\gamma_{i, n}$ above. Note that for every $n$, the element $\delta_n=\gamma_{1, n} \gamma_{2, n}$ belongs to $A$ and thus acts trivially on $Y_i$, and coincides with $\gamma_{1, n}$ in restriction to $\ell_1\subset \Omega$. Let $x\in \ell_1$ be at position 1, and consider the point $(x, y)\in \Omega \times Y_i$. Then the $n^2$ points $g_{m_2} \delta_{m_1}(x, y)=(g_{m_2}\gamma_{1,m_1}x, g_{m_2}y)$ for $m_1, m_2\le n$ are pairwise distinct, showing that there are at least $n^2$ in a ball of radius $O(n^2)$. This cannot be true for $n$ arbitrarily large, showing that the cardinality of $Y_i$ must be uniformly bounded. 

Finally, since the group $G$ is finitely generated, we may choose a finite index subgroup $K$ which acts trivially on $Y_i$ for every $i$, showing the last sentence in the statement. \qedhere
\end{proof}

\begin{thm} \label{t-Houghton}
For every $r\ge 2$ and every non-trivial finitely generated group $K$, the group $H_r\ast K$ has a Schreier growth gap $n^2$. 
\end{thm}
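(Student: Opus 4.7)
The strategy is to verify that $G = H_r$ satisfies the sparse support condition (Definition \ref{def:C}) at scale $(\alpha(n), \beta(n)) = (n, n^2)$, and then to invoke Proposition \ref{p-sparse-support} together with Remark \ref{r-sparse-support}. The remark applies to the free product $L = H_r \ast K$ because for every non-trivial $g \in H_r$ and non-trivial $t \in K$, the commutator $[g,t]$ has infinite order in $L$ (by the normal form theorem in a free product). The resulting Schreier growth gap is $\min(n\alpha(n), \beta(n)) = n^2$, as required.

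Let $X$ be a faithful $H_r$-set with $\vol_{H_r, X}(n) \not\succcurlyeq n^2$. By Proposition \ref{p-Houghton}, after discarding the $A$-fixed set, $X$ decomposes as a disjoint union of $H_r$-orbits of the form $X_i \cong \Omega \times Y_i$ with $|Y_i| \leq M$ uniformly, and the action on each $Y_i$ factors through $H_r / A \cong \Z^{r-1}$. Since there are only finitely many such finite actions, fix a finite-index normal subgroup $N \leq H_r$ containing $A$ that acts trivially on every $Y_i$, and fix an integer $m \ge 1$ such that $t^m \in N$ for a chosen generator $t$ of $H_r$ shifting the ray $\ell_1$ outward by $1$.

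For each $R$, set
\[ g_R = \prod_{j=1}^{R} (p_j,\, p_j + m,\, p_j + 2m) \in A, \]
a product of $R$ disjoint $3$-cycles on $\ell_1$, with $p_j$ at $\Xi_r$-distance $\ge cR$ from the origin and consecutive $p_j$ spaced by $\ge R + 2m$, for a constant $c$ to be fixed. Since $g_R \in A \subseteq N$, it acts on $X_i$ by $g_R(p, y) = (g_R p, y)$. Condition (C1) holds with $D = 2m$: the moves $t^{\pm m}, t^{\pm 2m} \in N$ act trivially on $y$ and carry $(p, y)$ to $(p \pm m, y)$ or $(p \pm 2m, y)$, so the displacement is bounded by $2m$. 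Condition (C4) with $\alpha(n) = n$ holds because each support point $x$ projects via the $H_r$-equivariant map $X_i \to \Omega$ to a point $p$ at $\Xi_r$-distance $\ge cR$ from the origin, so $|B_{H_r, X}(x, R)| \ge |B_{\Xi_r}(p, R)| \ge R$.

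The verification of (C2) and (C3) is the main step; it hinges on the fiber distance $d_{X_i}((p,y),(p,y'))$ for $y \neq y' \in Y_i$. For each isomorphism type of $Y_i$ this distance is either uniformly bounded in $p$ (occurring when the kernel $\Lambda \subset \Z^{r-1}$ of the action on $Y_i$ fails to contain the hyperplane $x_1 = 0$, so that a bounded word in $H_r$ with vanishing $\ell_1$-shift can swap the fiber) or grows linearly with $|p|$ (otherwise: any element with non-trivial image in $\sym(Y_i)$ must shift the tail of $\ell_1$ nontrivially; but a word of length $L$ in $H_r$ is a pure shift on each ray outside a ball of radius $O(L)$ around the origin, so fixing $p \in \ell_1$ forces $L \gtrsim |p|$). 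Enlarging $D$ once to dominate the bounded regime across the finitely many isomorphism types of $Y_i$, and choosing $c$ large enough that the linear lower bound at $p_j \ge cR$ exceeds $R$, we obtain that the $D$-coarse components of $\supp(g_R) \cap X_i$ are contained in clusters $\{p_j, p_j+m, p_j+2m\} \times Y_i$ of cardinality $\le 3M$ and bounded diameter, proving (C2); and distinct components are $R$-separated (different $j$'s at $\Xi_r$-distance $\ge R$; same $j$ but different fibers, when not merged, at distance $\ge R$ by the linear fiber-distance lower bound; different $H_r$-orbits at infinite distance), proving (C3). The main technical point is this fiber-distance dichotomy, which ultimately rests on the abelianization structure of $H_r$. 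Applying Proposition \ref{p-sparse-support} then finishes the proof.
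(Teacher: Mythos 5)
Your proposal is correct in substance and uses the same top-level strategy as the paper (verify the sparse support condition of Definition \ref{def:C} at scale $(n,n^2)$ via Proposition \ref{p-Houghton}, then apply Proposition \ref{p-sparse-support}), but the verification itself takes a genuinely different, and much heavier, route. The paper's proof fixes a \emph{single} non-trivial $g\in\Alt_{\mathrm{f}}(\Omega)$, the same for every $R$: by Proposition \ref{p-Houghton}, $g$ acts trivially on the non-faithful orbits, and on each faithful orbit $X_i\cong\Omega\times Y_i$ its support is the fixed finite set $\supp_\Omega(g)\times Y_i$, whose diameter in $\Gamma(H_r,X_i)$ is bounded uniformly in $i$ (the base points range over a fixed finite subset of $\Omega$ and the fibre moves are realized by finitely many group elements, since there are only finitely many isomorphism types of $Y_i$). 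Hence each orbit contributes at most one $D$-coarse component of bounded diameter, distinct components lie in distinct orbits and are therefore $R$-separated for \emph{every} $R$ (infinite distance), and (C4) is automatic because the faithful orbits are infinite. You instead build $R$-dependent elements $g_R$ whose support is pushed far out along $\ell_1$, and this is precisely what creates the difficulty you then have to resolve: once the supported points $(p,y)$ sit at distance $\simeq R$ from the origin, the fibre distance between $(p,y)$ and $(p,y')$ is no longer obviously bounded, and you need the bounded-versus-linear dichotomy (plus the auxiliary subgroup $N$, the integer $m$, and spacing constants) to recover (C2)--(C3). That dichotomy argument is essentially sound — the key points being that a word of length $L$ acts as a pure shift on each ray outside an $O(L)$-ball and that the action on $Y_i$ factors through a finite quotient of $H_r/\Alt_{\mathrm{f}}(\Omega)$ — though your identification $H_r/\Alt_{\mathrm{f}}(\Omega)\cong\Z^{r-1}$ is not quite accurate (there is an extra central $\Z/2$ coming from $\Sym_{\mathrm{f}}(\Omega)/\Alt_{\mathrm{f}}(\Omega)$, e.g.\ for $r=2$); this does not damage the argument, since only the eventual-shift homomorphism on $\ell_1$ is used, but it should be stated correctly. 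In short: your proof works up to routine constant bookkeeping, but all of its technical content (the fibre-distance analysis) disappears if you simply keep the witness element fixed and let the orbit decomposition of Proposition \ref{p-Houghton} do the separating, as the paper does; your observation that only Remark \ref{r-sparse-support} is needed from the free product structure matches the paper's remark following the theorem.
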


\begin{proof}
By Proposition \ref{p-sparse-support}, it is enough to show that $G=H_r$ satisfies the sparse support condition at scale $(n, n^2)$. This is an immediate consequence of Proposition \ref{p-Houghton}. Indeed fix any non-trivial element $g\in A$. Then  for every faithful $G$ set $X$ such that $\vol_{G, X}(n) \not \succcurlyeq n^2$, the element $g$ has finite support in each $G$-orbit, so one can find constants $C, D>0$ such that $g$ satisfies conditions (C1)--(C3) for every $R>0$, and condition (C4) is automatic, because every infinite connected graph has at least linear growth. \qedhere
\end{proof}

\begin{remark}
We point out that the proof of Theorem  \ref{t-Houghton}, the assumption that the group $L:=H_r\ast K$ is a free product is only used to invoke  Proposition \ref{p-sparse-support}. As a consequence, this assumption can be relaxed as in Remark \ref{r-sparse-support}. 
 \end{remark}
 
 \begin{remark}
To prove Theorem \ref{t-Houghton} we could have restricted to the case $r=2$, since every $H_r$ contains $H_2$ as a subgroup. Note however that the previous proof provides additional information on actions of small growth of $H_r$ for general $r$ (Proposition \ref{p-Houghton}). 
\end{remark}

\subsection{Topological full groups} \label{subsec-full-group}

Throughout this section we let $Z$ be a Cantor space, and $\mathcal{G}\le \homeo(Z)$ be a subgroup of its group of homeomorphisms. Recall that the \textbf{topological full group} of $\mathcal{G}$ is the group $\Fsf(\mathcal{G})$ of all homeomorphisms $h$ of $Z$ such that for every $z\in Z$, there exist a clopen neighbourhood  $U$ of $z$ and $g\in \mathcal{G}$ such that $h|_U=g|_U$. When  $\mathcal{G}$ is a cyclic group generated by a single homeomorphism $\varphi$, we write $\Fsf(\varphi)$ instead of $\Fsf(\mathcal{G})$. The group $\Fsf(\mathcal{G})$ is countable provided $\Gcal$ is countable.

We say that an element $g\in \Fsf(\Gcal)$ is a \textbf{3-cycle} if $g^{3}=1$ 
and its support $\supp(g):=\{z\colon g(z)\neq z\}$ is clopen and can be partitioned into 3 distinct clopen subsets $U_1, U_2, U_3$ such that $g(U_i)=U_{i+1}$ (with $i$ mod 3). Following Nekrashevych \cite{Nek-simple}, the subgroup of $\Fsf(\Gcal)$ generated by 3-cycles is called the \textbf{alternating full group} of $\Gcal$, and is denoted $\Asf(\Gcal)$. It is shown in \cite{Nek-simple}  that if the action of $\Gcal$ on $Z$ is \textbf{minimal} (that is, all its orbits are dense), then the group $\Asf(\Gcal)$ is simple and contained in every non-trivial normal subgroup of $\Fsf(\Gcal)$. It is also shown in \cite{Nek-simple} that if $\Gcal$ is finitely generated and its action on $Z$ is expansive and all its  orbits have cardinality at least 5, then $\Asf(\Gcal)$ is finitely generated. Recall that an action of a finitely generated group $\Gcal$ on the Cantor space $Z$ is \textbf{expansive} if and only if it is conjugate to a \textbf{subshift} (a $\Gcal$-invariant subset of $\{1, \cdots d\}^\Gcal$ for some $d\ge 2$).
In many cases, results of Matui show that $\Asf(\Gcal)$ coincides with the commutator subgroup of $\Fsf(\Gcal)$ (see his survey \cite{Mat-survey}).  

It is well-known (and not difficult to see) that if the group $\Gcal$ is finitely generated, then for every finitely generated subgroup $G$ of $\Fsf(\Gcal)$, the identity map on $Z$  defines a Lipschitz map between the graphs of the actions $\Gamma(G, Z)\to \Gamma(\Gcal, Z)$. Since the groups $\Asf(\Gcal)$ and $\Gcal$ have the same topological full groups, it follows that if the action of $\Gcal$ on $Z$ is expansive and has no orbits of cardinality less than 5, then the graph $\Gamma(A(\Gcal), Z)$ is bi-Lipschitz equivalent to $\Gamma(\Gcal, Z)$. In particular $\vol_{\A(\Gcal), Z}(n)\simeq \vol_{\Gcal, Z}(n)$. 

For a group $G$ acting on $Z$ by homeomorphisms, and for a finite set $Q\subset  Z$, we denote by $G_Q$ the setwise stabiliser of $Q$ and by $G^0_Q$  its subgroup consisting of elements fix pointwise some neighbourhood of every $z\in Q$. The following is proven in \cite{MB-full}.

\begin{thm}[\cite{MB-full}] \label{t-confined-full}
Let $\Gcal\le \homeo(Z)$ be a group acting minimally on $Z$, and let $G=\Asf(\Gcal)$. Then a subgroup $H$ of $ G$ is confined if and only if there exists a finite set $Q\subset Z$ such that $G^0_Q\le H \le G_Q$ (the set $Q$ is moreover unique).
\end{thm}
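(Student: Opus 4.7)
I would split the proof into the easy reverse implication and the hard direct implication, introducing an intrinsic candidate $Q_H$ for the latter.

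\emph{Easy direction.} Assume $G^0_Q \le H \le G_Q$ and write $k := |Q|$. For every $g \in G$ one has $(G^0_Q)^g = G^0_{g(Q)} \le H^g$, and $|g(Q)| = k$. Since $\Gcal$ acts minimally on the Cantor space $Z$, I fix a clopen partition $Z = U_0 \sqcup \cdots \sqcup U_k$ so that each $U_i$ contains three pairwise disjoint clopens cyclically permuted by some element of $\Gcal$; this produces a $3$-cycle $c_i \in \Asf(\Gcal)$ supported in $U_i$. For every $g$, at least one $U_i$ is disjoint from $g(Q)$, and then $c_i \in G^0_{g(Q)} \le H^g$. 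Hence $P = \{c_0, \ldots, c_k\}$ is a finite subset of $G \setminus \{1\}$ witnessing that $H$ is confined.

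\emph{Hard direction.} Assume $H$ is confined and introduce the closed set
\[Q_H := \{z \in Z \colon \text{no clopen neighborhood of } z \text{ contains the support of a non-trivial element of } H\}.\]
I aim to show that $Q_H$ is finite and $G^0_{Q_H} \le H \le G_{Q_H}$. The inclusion $H \le G_{Q_H}$ is immediate: the defining property of $Q_H$ is invariant under conjugation by elements of $H$, so every $h \in H$ preserves $Q_H$ setwise. The inclusion $G^0_{Q_H} \le H$ I would derive from the definition of $Q_H$ together with Nekrashevych's commutator machinery for alternating full groups. By the definition of $Q_H$ combined with compactness, every clopen $V \subseteq Z \setminus Q_H$ contains the support of some non-trivial $h \in H$; iterating commutator identities between such $h$'s and suitable $3$-cycles supported on small clopens away from $Q_H$, and using the simplicity of the alternating full group of the restricted action, one bootstraps the set of $3$-cycles in $H$ until it exhausts $G^0_{Q_H}$. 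The uniqueness of $Q$ will then follow by a direct argument: if $G^0_{Q_1} \le H \le G_{Q_2}$ and $Q_2 \setminus Q_1 \neq \emptyset$, a $3$-cycle of $G^0_{Q_1}$ supported near a point of $Q_2 \setminus Q_1$ would fail to preserve $Q_2$, a contradiction.

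The real difficulty, and where confinement is genuinely used, is to prove that $Q_H$ is finite. I would argue by contradiction. Let $P = \{p_1, \ldots, p_r\}$ be a finite confining set and let $N$ be an integer (depending on $P$) bounding the combinatorial complexity of the supports $S_i := \supp(p_i)$, e.g. the number of atoms in some finite clopen partition refining all the $S_i$. If $|Q_H| > N$, select pairwise disjoint small clopen neighborhoods $V_1, \ldots, V_{N+1}$ of distinct points of $Q_H$. Using the richness of the $G$-action (minimality of $\Gcal$ combined with the homogeneity of $\Asf(\Gcal)$ on clopen pieces of the same dynamical type), I would construct a single $g \in G$ such that for \emph{every} $p \in P$ simultaneously the support of $g p g^{-1}$ lies inside some $V_j$. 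Confinement applied to $g^{-1}$ then yields a non-trivial element $g p g^{-1} \in H$ supported in $V_j$, contradicting the definition of $Q_H$ since $V_j$ is a clopen neighborhood of a point of $Q_H$.

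The main obstacle is precisely this simultaneous control: the confinement relation supplies $p$ only \emph{after} the conjugator is chosen, and the supports $S_i$ may overlap in complicated ways. I would handle this by first performing a preliminary conjugation that disjointifies the supports $S_i$, and then using minimality atom-by-atom to transport each of the disjointified pieces into the target clopens $V_j$; the delicate point is to ensure that the resulting conjugator actually belongs to $G = \Asf(\Gcal)$ rather than to a larger symmetry group of $Z$, which should be achievable by composing with $3$-cycles already present in $G$ and working within clopen pieces of matching dynamical type.
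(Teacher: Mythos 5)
This theorem is not proved in the paper at all: it is imported verbatim from \cite{MB-full}, so your attempt can only be measured against that external argument, and as it stands it has two fatal gaps in the hard direction (the easy direction, confining $G^0_Q\le H\le G_Q$ by a set of $3$-cycles supported in $|Q|+1$ disjoint clopen pieces, is essentially fine up to the usual $H^g$ versus $g(Q)$ bookkeeping). First, your intrinsic candidate $Q_H$ is the wrong set. You put $z\in Q_H$ when no clopen neighbourhood of $z$ contains the support of a non-trivial element of $H$; but take $H=G^0_Q$ with $Q=\{q\}$: for any clopen $V\ni q$ choose a smaller clopen $W$ with $q\in W\subsetneq V$; by minimality there is a $3$-cycle supported in $V\setminus W$, and it lies in $G^0_Q=H$. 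Hence $q\notin Q_H$, so $Q_H=\emptyset$, $G^0_{Q_H}=G$, and the inclusion $G^0_{Q_H}\le H$ you are aiming for is false already for the model subgroups the theorem describes. The correct candidate must test containment of the whole local group, i.e.\ $z\notin Q$ if and only if there is a clopen $V\ni z$ with $\{g\in G\colon \supp(g)\subseteq V\}\le H$, and then the step you wave at (``iterating commutator identities \dots one bootstraps'') is exactly the hard content of the theorem, a commutator-type lemma for confined subgroups, not a routine verification.

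Second, the finiteness argument cannot work as designed. You want one conjugator $g\in G$ sending, simultaneously, the support of \emph{every} element of the confining set $P$ into one of the small clopen sets $V_j$. But supports cannot be shrunk by conjugation inside a full group: whenever the $\Gcal$-action admits an invariant probability measure $\mu$ (always the case for $\Gcal=\Z$, the most relevant instance in this paper), every element of $\Fsf(\Gcal)$ preserves $\mu$, so $\mu\bigl(\supp(gpg^{-1})\bigr)=\mu\bigl(\supp(p)\bigr)$; if some $p\in P$ has support of measure exceeding $\max_j \mu(V_j)$, no such $g$ exists, in $G$ or in any larger subgroup of $\homeo(Z)$ contained in the full group. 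So confinement has to be exploited through a different mechanism than ``make the confining elements have small support'', which is indeed how the cited proof differs in substance from your plan. A minor further point: your uniqueness sketch only excludes $Q_2\setminus Q_1\neq\emptyset$; the symmetric inclusion needs its own (easy) argument.
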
 

\begin{cor} \label{c-full-small-growth}
Retain the assumptions of Theorem \ref{t-confined-full}, and assume further that $\Gcal$ is finitely generated and that its action on $Z$ is expansive. Let  $\alpha(n)=\vol_{\mathcal{G}, Z}(n)$. Let $X$ be a faithful $G$-set such that $\vol_{G, X}(n)\not \succcurlyeq \alpha(n)^2$, and let $H$ be the stabiliser of a point in $X$. Then there exists $z\in Z$ such that $G^0_z\le H\le G_z$. 
\end{cor}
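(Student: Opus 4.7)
The plan is to combine the classification of confined subgroups given by Theorem \ref{t-confined-full} with a quantitative estimate on the growth of the diagonal $G$-action on pairs in $Z\times Z$. Concretely, the main technical input I aim to establish is the following claim: \emph{for any two distinct points $z_1,z_2\in Z$, the $G$-orbit of $(z_1,z_2)$ under the diagonal action satisfies $\vol_{G,\, G\cdot(z_1,z_2)}(n) \succcurlyeq \alpha(n)^2$.} Granting this, the corollary will follow in two steps. First, $H$ must be confined: otherwise Lemma \ref{lem:SmallGrowthConfined} would give $\vol_{G,X}(n)\simeq\vol_G(n)$, and since $\vol_G(n)$ dominates the growth of any $G$-orbit, the claim yields $\vol_G(n)\succcurlyeq\alpha(n)^2$, contradicting the hypothesis. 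Then by Theorem \ref{t-confined-full} there is a unique finite $Q\subset Z$ with $G^0_Q\le H\le G_Q$, and the remaining issue is to exclude $|Q|\ge 2$.

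To establish the pair-growth estimate, the plan is to pick disjoint clopen neighborhoods $U_1\ni z_1$ and $U_2\ni z_2$ and to construct, for every $n\ge 1$, families $\{g_1,\ldots,g_k\}$ and $\{h_1,\ldots,h_k\}$ of elements of $G=\Asf(\Gcal)$, with $k\simeq\alpha(n)$, each of word length $O(n)$, such that $g_i$ is supported in $U_1$ and the points $g_iz_1$ are pairwise distinct, and symmetrically the $h_j$ are supported in $U_2$ with the $h_jz_2$ pairwise distinct. Since elements supported in disjoint clopen sets commute, the products $g_ih_j$ send $(z_1,z_2)$ to $(g_iz_1,h_jz_2)$, yielding $k^2\simeq\alpha(n)^2$ distinct pairs in a ball of radius $O(n)$. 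The construction of the $g_i$'s exploits the $3$-cycle generators of $\Asf(\Gcal)$ together with the bi-Lipschitz equivalence of $\Gamma(G,Z)$ with $\Gamma(\Gcal,Z)$: starting from words of length $\simeq n$ in the generators of $\Gcal$ that realize various displacements of $z_1$ inside $U_1$ (which exist because by minimality the ball $B_{\Gcal,Z}(z_1,n)$ has size $\simeq\alpha(n)$ and can be arranged to lie in $U_1$ after an appropriate choice of the clopen neighbourhood relative to $n$), one builds corresponding elements of $G$ of comparable word length supported in $U_1$ by assembling $3$-cycles along the relevant orbit segments.

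To conclude, assume $|Q|\ge 2$ and pick distinct $z_1,z_2\in Q$. Then $H\le G_Q\le G_{\{z_1,z_2\}}$, where $G_{\{z_1,z_2\}}$ denotes the setwise stabilizer of the pair. Combining Lemma \ref{lem:FiniteIndex} (applied to the index-$2$ inclusion of the pointwise into the setwise stabilizer) with the claim yields
\[
\vol_{G,X}(n) \;\succcurlyeq\; \vol_{G,\, G/G_{\{z_1,z_2\}}}(n) \;\simeq\; \vol_{G,\, G\cdot(z_1,z_2)}(n) \;\succcurlyeq\; \alpha(n)^2,
\]
contradicting the hypothesis. Hence $|Q|\le 1$, and after setting aside the degenerate case $H=G$ (which corresponds to $Q=\varnothing$, i.e.\ $x$ being a global fixed point of $G$ in $X$, where the conclusion is vacuous since $G$ inherits minimality from $\Gcal$ and has no global fixed point in $Z$), we obtain $|Q|=1$, giving the desired sandwich. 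The main obstacle is the pair-growth claim of the second paragraph: producing $\alpha(n)$ localized elements of short word length inside a prescribed clopen set that individually witness the full local growth of $\Gamma(\Gcal,Z)$. This is the quantitative manifestation of the ``sparse support'' feature of expansive topological full groups, which is exactly what feeds into Proposition \ref{p-sparse-support} to yield Theorem \ref{t-intro-full}.
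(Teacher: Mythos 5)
Your overall skeleton (confinement via Lemma \ref{lem:SmallGrowthConfined}, the classification of Theorem \ref{t-confined-full}, then a quadratic growth lower bound coming from two ``independent'' directions to exclude $|Q|\ge 2$) is the same as the paper's, but the entire content of the corollary is concentrated in your pair-growth claim, and the sketch you give for it has a genuine gap in two places. First, you assert that $|B_{\Gcal,Z}(z_1,n)|\simeq\alpha(n)$ ``by minimality''. This is unjustified: $\alpha(n)=\vol_{\Gcal,Z}(n)$ is a maximum over base points, and the points $z_1,z_2$ come from the set $Q$ produced by Theorem \ref{t-confined-full}, so you have no control over them; in a minimal non-free action the ball at a particular point can be much smaller than $\alpha(n)$ at every scale. (Even the weaker statement that every \emph{orbit} has growth $\simeq\alpha(n)$ is not a formal consequence of minimality; the paper cites \cite[Lemma 6.2]{LB-MB-comm-lemma} for it, and that statement again only produces, for each $n$, \emph{some} point of the orbit with a large $n$-ball.) Second, the step ``assemble $3$-cycles along the relevant orbit segments'' does not produce what you need: to make the products $g_ih_j$ act coordinatewise you need $g_i$ supported in a clopen set $U_1$ disjoint from $U_2\ni z_2$, fixed in advance, but orbit segments of length $n$ issuing from $z_1$ in general leave $U_1$ (and the $n$-ball around $z_1$ cannot always be separated from $z_2$ at all, e.g.\ if $z_2$ is an orbit-neighbour of $z_1$), so the naively assembled $3$-cycles are not supported in $U_1$; moreover bounding their word length in $G$ by the displacement is itself nontrivial. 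This localization step is precisely where the paper invokes Nekrashevych: it takes $G_i\le G$ to be the alternating full group of the groupoid of germs restricted to $U_i$, which is finitely generated \cite[Proposition 5.4]{Nek-simple} and whose graph $\Gamma(G_i,U_i)$ is quasi-isometric to $\Gamma(G,Z)$ \cite[Cor 2.3.4]{Nek-hyp}. With that input the base-point problem also disappears, because the paper bounds below the growth of the action of $G_1\times\cdots\times G_r$ on the orbit of $Q$ (again a maximum over base points), where the $r$ coordinates can be placed independently at favourable positions --- something your diagonal-orbit-of-$(z_1,z_2)$ formulation does not allow without an extra argument.

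Two smaller points. For $|Q|\ge 3$ your chain $H\le G_Q\le G_{\{z_1,z_2\}}$ is false: an element preserving $Q$ setwise need not preserve the pair $\{z_1,z_2\}$; you must first pass to the finite-index subgroup of $H$ fixing $Q$ pointwise and then to an overgroup, using Lemma \ref{lem:FiniteIndex} (this is exactly the manoeuvre the paper performs in the Houghton case; in the present corollary it sidesteps the issue by proving the bound $\alpha(n)^r$ for the whole of $Q$ directly). The paper's proof is in fact shorter than yours at this point because it never reduces to pairs. Finally, in the degenerate case $H=G$ the conclusion is not ``vacuous'' but unattainable (there is no $z$ with $G_z=G$, by minimality); this edge case is implicitly excluded in the paper as well, so it is not a real objection, but your phrasing of it is inaccurate.
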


\begin{proof}
The assumption implies that $H$ must be confined (Lemma \ref{lem:SmallGrowthConfined}). Let $Q\subset Z$ be the finite subset given by Theorem \ref{t-confined-full}. Enumerate the points of $Q$ as  $z_1,\ldots, z_r$ and choose pairwise disjoint clopen neighbourhoods $U_i$ of $z_i$. For each $i$ we choose a finitely generated subgroup $G_i\le G$, supported on $U_i$, such that the inclusion map $U_i\to Z$ defines a quasi-isometry of graphs $\Gamma(G_i, U_i)\to \Gamma(G, Z)$ (one can choose $G_i$ to be the alternating full group of the restriction of the groupoid of germs of $\mathcal{G}$ to $U_i$; its finite generation follows from \cite[Proposition 5.4]{Nek-simple} and the quasi-isometry of the graphs from  \cite[Cor 2.3.4]{Nek-hyp}). 
It follows that the action of $G_1\times \cdots \times G_r$ on the orbit of $Q$ has growth bounded below by $\alpha(n)^r$, from which it follows that $\vol_{G, G/H}(n)\ge \vol_{G, G/G_Q}(n)\succcurlyeq \alpha(n)^r$, which contradicts the assumption unless $r=1$. \qedhere
\end{proof}


Recall that for any subgroup $G \le \homeo(Z)$, the maps $Z \to \sub(G)$ defined by $z\mapsto G_z$ and $z\mapsto G^0_z$ are respectively upper and lower semi-continuous. When $G$ is countable, a Baire argument implies that $G_z=G^0_z$ for $z$ in a dense $G_\delta$ subset.

\begin{thm}
Assume that $\Gcal$ is finitely generated and its action on $Z$ is minimal and expansive, and let  $\alpha(n)=\vol_{\mathcal{G}, Z}(n)$.  Then for every non-trivial finitely generated group $H$, the group $G = \Asf(\Gcal)\ast H$ has a Schreier growth gap $n\alpha(n)$. 
\end{thm}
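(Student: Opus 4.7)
The plan is to apply Proposition~\ref{p-sparse-support} to $G=\Asf(\Gcal)$: I will show that $G$ satisfies the sparse support condition at scale $(\alpha,\alpha^2)$, whence Proposition~\ref{p-sparse-support} yields the desired gap $\min(n\alpha(n),\alpha(n)^2)=n\alpha(n)$ for $G\ast H$ (the identity uses $\alpha(n)\succcurlyeq n$, which holds because every infinite connected Schreier graph has at least linear growth).

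Fix a faithful $G$-set $X$ with $\vol_{G,X}(n)\not\succcurlyeq\alpha(n)^2$. First I would use Corollary~\ref{c-full-small-growth} to define a $G$-equivariant and $1$-Lipschitz map $\phi\colon X\to Z$ sending each $x\in X$ to the unique point $z(x)\in Z$ satisfying $G^0_{z(x)}\le\stab_G(x)\le G_{z(x)}$. The image $\phi(X)$ is a union of $G$-orbits in $Z$ (which coincide with $\Gcal$-orbits, and are dense by minimality), and within each $G$-orbit $X_i\subset X$ the fibers of $\phi|_{X_i}$ have cardinality at most $[G_z:G^0_z]$, a quantity uniformly bounded across $z\in Z$ for minimal expansive actions (via the bounded isotropy of the groupoid of germs).

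Next I would construct $g_R\in G$ for each $R$. Fix once and for all a non-trivial $\gamma\in\Gcal$ whose fixed-point set $\mathrm{Fix}(\gamma)$ has empty interior in $Z$ (such $\gamma$ exists by general properties of minimal expansive actions). For each $R\ge 2$, choose a non-empty clopen $V_R\subset Z$ such that $\eta V_R\cap V_R=\emptyset$ for every $\eta\in\Gcal$ with $1\le|\eta|_\Gcal\le R$; this is possible by taking $V_R$ a sufficiently small clopen neighbourhood of a point of $Z$ lying outside the (finite) union of fixed-point interiors of such $\eta$'s. Then $V_R,\gamma V_R,\gamma^2 V_R$ are pairwise disjoint, and I define $g_R\in\Asf(\Gcal)$ to be the $3$-cycle that equals $\gamma$ on $V_R\cup\gamma V_R$, $\gamma^{-2}$ on $\gamma^2 V_R$, and the identity elsewhere. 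The verifications of (C2)--(C4) then follow the Houghton template: $\supp_X(g_R)=\phi^{-1}(V_R\cup\gamma V_R\cup\gamma^2 V_R)$ intersects each $X_i$ in a disjoint union of ``clusters'' $\phi^{-1}(\{z,\gamma z,\gamma^2 z\})\cap X_i$, one for each visit $z\in V_R\cap\phi(X_i)$, and each cluster has uniformly bounded diameter (from the fiber bound), giving (C2); distinct clusters in the same orbit are $R$-separated after rescaling $V_R$ to $V_{R'}$ with $R'=CR$ for a suitable constant $C$, since the visits are $R'$-separated in $\Gamma(\Gcal,Z)$ by construction and $\Gamma(\Gcal,Z)\simeq\Gamma(G,Z)$ up to bi-Lipschitz equivalence, giving (C3); and for (C4), $\phi$ being $1$-Lipschitz with bounded fibers gives $|B_{G,X}(x,R)|\succcurlyeq|B_{G,Z}(\phi(x),R)|\succcurlyeq\alpha(R)$, the last inequality being a uniform lower bound on $\Gcal$-Schreier balls in $Z$ coming from minimality.

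The main obstacle is verifying (C1): bounding $d_{G,X}(x, g_R x)$ by a constant $D$ independent of $R$, even though the word length $|g_R|_G$ grows with $R$ as $V_R$ shrinks. The idea is that $g_R$ acts locally as $\gamma$ (or $\gamma^{-2}$) near each point of its support, and for each such germ and each $z\in Z$ there should exist an element $h\in G$ of uniformly bounded word length $M$ whose germ at $z$ matches that of $g_R$; once such $h$ is available, $g_R h^{-1}\in G^0_{\phi(x)}\le\stab_G(x)$, so $g_R x = h x$ and $d_{G,X}(x,g_R x)\le M$. Extracting this ``bounded germ representation'' property from the finite generation of $\Asf(\Gcal)$ as a topological full group (e.g.\ from \cite[Proposition~5.4]{Nek-simple}) combined with the bi-Lipschitz equivalence $\Gamma(\Asf(\Gcal),Z)\simeq\Gamma(\Gcal,Z)$ is the technical heart of the argument.
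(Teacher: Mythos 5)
Your overall skeleton is the same as the paper's (establish the sparse support condition at scale $(\alpha,\alpha^2)$, feed it into Proposition~\ref{p-sparse-support}, and use Corollary~\ref{c-full-small-growth} to get the equivariant map $\phi\colon X\to Z$), but the proposal has a genuine gap exactly at the point you flag as ``the technical heart'': condition (C1). Because your $3$-cycles $g_R$ are modelled on a fixed $\gamma\in\Gcal$, bounding $d_{G,X}(x,g_Rx)$ requires, for every point of $\supp_X(g_R)$, an element of $G=\Asf(\Gcal)$ of \emph{uniformly bounded} $S$-length whose \emph{germ} at $\phi(x)$ agrees with that of $g_R$; only then does $G^0_{\phi(x)}\le G_x$ let you replace $g_R$ by a short element. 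The bi-Lipschitz equivalence $\Gamma(G,Z)\simeq\Gamma(\Gcal,Z)$ you invoke controls where points are sent, not germs, so it does not yield this, and no argument is given. The paper avoids the issue entirely: it picks a generic point $z_0$ with $G_{z_0}=G^0_{z_0}$ and $|B_Z(z_0,R)|=\alpha(R)$, chooses generators $s_1,s_2\in S$ with $z_0,s_1z_0,s_2s_1z_0$ distinct, and defines the $3$-cycle $h_n$ to act as $s_1$, $s_2$, $(s_2s_1)^{-1}$ on shrinking clopen neighbourhoods of these three points; then near every $\phi(x)$ the element $h_n$ coincides with a word of length at most $2$, so (C1) holds with $D=2$ by construction. (One could try to repair your version by a compactness argument realizing the germ of $\gamma$ by finitely many $3$-cycles, but that is additional work you have not done, and it still needs the triples $z,\gamma z,\gamma^2 z$ to stay uniformly ``generic''.)

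There are further steps that would fail as written. Your choice of $V_R$ with $\eta V_R\cap V_R=\emptyset$ for all non-trivial $\eta\in\Gcal$ of length at most $R$ requires a point whose stabilizer in the $R$-ball of $\Gcal$ is trivial, and likewise the existence of $\gamma$ with $\mathrm{Fix}(\gamma)$ of empty interior is not a general feature of minimal expansive actions (they need not be topologically free); the paper instead works at the single generic point $z_0$, where every short element either moves $z_0$ or lies in $G^0_{z_0}$, which is what makes the separation (C3) work. For (C4), minimality does \emph{not} give a uniform lower bound $|B_Z(z,R)|\succcurlyeq\alpha(R)$ at every $z$ with constants independent of $R$; the paper gets (C2)--(C4) simultaneously by using semicontinuity of stabilizers and the genericity of $z_0$ to show that, for $n$ large, the $R$-balls in $X$ around support points of $h_n$ are isomorphic to $B_Z(z_0,R)$. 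Finally, the uniform bound on $[G_z:G^0_z]$ you use for the fibres of $\phi$ is unjustified (and unnecessary in the paper's argument), and your Lipschitz estimate points the wrong way: a $1$-Lipschitz $\phi$ bounds $|B_X(x,R)|$ from \emph{above}; the correct inequality $|B_{G,X}(x,R)|\ge|B_{G,Z}(\phi(x),R)|$ follows from equivariance, $\phi(B_G(R)\cdot x)=B_G(R)\cdot\phi(x)$, but one still needs the pointwise lower bound just discussed.
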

\begin{proof}
We shall show that $G$ satisfies the sparse support condition at scale $(\alpha(n), \alpha(n)^2)$. As usual, we fix once and for all a finite symmetric generating set $S$ for $G$, which is used to define all Schreier graphs and distances below. Let $X$ be a faithful $G$-set with $\vol_{G, X}(n)\not \succcurlyeq \alpha(n)^2$. We shall show that it  satisfies the conclusion of Definition \ref{def:C} with constants $D=C=2$. To this end, fix $R>2$. 
 
 By \cite[Lemma 6.2]{LB-MB-comm-lemma}, for every $G$-orbit $\mathcal{O}$ in $Z$, we have $\vol_{G, \mathcal{O}}(n)=\alpha(n)$. Hence we can find a point $z_0\in Z$ satisfying the generic condition that $G^0_{z_0}=G_{z_0}$ and such that $|B_Z(z_0, R)|=\alpha(R)$.  Choose $s_1, s_2\in S$ such that $z_0, s_1(z_0),s_2s_1(z_0)$ are distinct. Let $(V_{n})$ be a system of clopen neighbourhoods of $z_0$, and such that $V_n, s_1(V_n), s_2s_1(V_n)$ are disjoint for every $n$. Define accordingly the sequence $(h_n)$ of 3-cycles given by 
 
 \[h_n(z)=\left\{ \begin{array}{lc} s_1(z) & z\in V_n\\ s_2(z) & z\in s_1(V_n) \\ s_1^{-1}s_2^{-1}(z) & z\in s_1s_1(V_n)\\ z & \text{ else.}
 
 \end{array}\right.
  \] 
  Observe that $d_{G, Z}(z, h_n(z))\le 2$ for every $z\in Z$,  by construction. 
  The supports $\supp(h_n)$  decreases to $\{z_0, h_n(z_0), h_n^2(z_0)\}$ as $n\to \infty$. It follows that for  $n$ is sufficiently large, we have $B_Z(z_0, R) \cap \supp_Z(h_n)= \{z_0, s_1(z_0), s_2s_2(z_0)\}$. 
  This shows that for $n$ large, the element $g=h_n$ satisfies the conditions in Definition \ref{def:C} for the action on $Z$. 
  
  Now consider the action on $X$ and pick a point $x\in X$, and denote by $\phi(x)\in Z$ the unique point such that $G^0_{\phi(x)}\le G_x \le G_{\phi(x)}$, given by Corollary \ref{c-full-small-growth}. Note that $\supp_X(h_n)\subset  \phi^{-1}(V_n\cup s_1(V_n)\cup s_2s_1(V_n))$, since $h_n\in G^0_{\phi(x)}$ for $\phi(x)\notin V_n\cup s_1(V_n)\cup s_2s_1(V_n)$. However since the point $z_0$ satisfies $G^0_{z_0}=G_{z_0}$ (and so do $s_1(z_0), s_2s_1(z_0)$) then it follows from semi-continuity  and the inclusions $G^0_\phi(x)\le G_x \le G_{\phi(x)}$ that for $n$ large enough every $x$ such that $\phi(x)\in  V_n$, then the ball $B_X(x, R)$ will be isomorphic to the corresponding ball  $B_Z( z_0, R)$, and similarly if $\phi(x)\in s_1(V_n), s_2s_1(V_n)$, with $z_0$ replaced by the corresponding image. Therefore the element $g=h_n$ satisfies all items in Definition \ref{def:C} for the action on $X$ as well. 
  \qedhere
 \end{proof}

\bibliographystyle{amsalpha}
\bibliography{bib-growth-free-prod}

\providecommand{\bysame}{\leavevmode\hbox to3em{\hrulefill}\thinspace}
\providecommand{\MR}{\relax\ifhmode\unskip\space\fi MR }
\providecommand{\MRhref}[2]{%
  \href{http://www.ams.org/mathscinet-getitem?mr=#1}{#2}
}
\providecommand{\href}[2]{#2}
\begin{thebibliography}{LBMB22b}

\bibitem[Ago13]{Agol}
I.~Agol, \emph{The virtual {H}aken conjecture}, Doc. Math. \textbf{18} (2013),
  1045--1087, With an appendix by Agol, Daniel Groves, and Jason Manning.
  \MR{3104553}

\bibitem[AS07]{Arz-Sun}
G.~Arzhantseva and Z.~Suni\'{c}, \emph{Constructions of elements in the closure
  of {G}rigorchuk group}, Geom. Dedicata \textbf{124} (2007), 5--26, Appendix
  to ``The true prosoluble completion of a group: Examples and open problems''
  by G. Arzhantseva, P. de la Harpe, D. Kahrobaei, and Z. \v{S}uni\'{c}.
  \MR{2318535}

\bibitem[BG00]{Bar-Gri-Hecke}
L.~Bartholdi and R.~I. Grigorchuk, \emph{On the spectrum of {H}ecke type
  operators related to some fractal groups}, Tr. Mat. Inst. Steklova
  \textbf{231} (2000), no.~Din. Sist., Avtom. i Beskon. Gruppy, 5--45.

\bibitem[Gri84]{Gri-growth}
R.~I. Grigorchuk, \emph{Degrees of growth of finitely generated groups and the
  theory of invariant means}, Izv. Akad. Nauk SSSR Ser. Mat. \textbf{48}
  (1984), no.~5, 939--985. \MR{764305}

\bibitem[HW08]{Ha-Wi1}
F.~Haglund and D.~T. Wise, \emph{Special cube complexes}, Geom. Funct. Anal.
  \textbf{17} (2008), no.~5, 1551--1620. \MR{2377497}

\bibitem[LBMB22a]{LB-MB-ht}
A.~Le~Boudec and N.~Matte~Bon, \emph{Confined subgroups and high transitivity},
  Ann. H. Lebesgue \textbf{5} (2022), 491--522. \MR{4443296}

\bibitem[LBMB22b]{LB-MB-solv}
\bysame, \emph{Growth of actions of solvable groups}, 2022, arXiv preprint.

\bibitem[LBMB23]{LB-MB-comm-lemma}
\bysame, \emph{A commutator lemma for confined subgroups and applications to
  groups acting on rooted trees}, Trans. Amer. Math. Soc. \textbf{376} (2023),
  no.~10, 7187--7233. \MR{4636688}

\bibitem[Mat06]{Mat-simple}
H.~Matui, \emph{Some remarks on topological full groups of {C}antor minimal
  systems}, Internat. J. Math. \textbf{17} (2006), no.~2, 231--251.

\bibitem[Mat17]{Mat-survey}
\bysame, \emph{Topological full groups of \'{e}tale groupoids}, Operator
  algebras and applications---the {A}bel {S}ymposium 2015, Abel Symp., vol.~12,
  Springer, [Cham], 2017, pp.~203--230. \MR{3837599}

\bibitem[MB18]{MB-full}
N.~Matte~Bon, \emph{Rigidity properties of topological full groups of
  pseudogroups over the cantor set}.

\bibitem[Nek15]{Nek-hyp}
V.~Nekrashevych, \emph{Hyperbolic groupoids and duality}, Mem. Amer. Math. Soc.
  \textbf{237} (2015), no.~1122, v+105. \MR{3399889}

\bibitem[Nek18]{Nek-frag}
\bysame, \emph{Palindromic subshifts and simple periodic groups of intermediate
  growth}, Ann. of Math. (2) \textbf{187} (2018), no.~3, 667--719.

\bibitem[Nek19]{Nek-simple}
\bysame, \emph{Simple groups of dynamical origin}, Ergodic Theory Dynam.
  Systems \textbf{39} (2019), no.~3, 707--732.

\bibitem[Neu37]{Neumann-manygroups}
B.~H. Neumann, \emph{Some remarks on infinite groups}, J. London Math. Soc.
  \textbf{12} (1937), 120--127.

\bibitem[Sal21]{Salo-graph-prod}
V.~Salo, \emph{Graph and wreath products in topological full groups of full
  shifts}, arXiv:2103.06663.

\bibitem[Sch27]{Sch}
O.~Schreier, \emph{Die {U}ntergruppen der freien {G}ruppen}, Abh. Math. Sem.
  Univ. Hamburg \textbf{5} (1927), no.~1, 161--183.

\end{thebibliography}

\end{document}